\documentclass[12pt]{amsart}
\usepackage[normalem]{ulem}
\usepackage{lineno}
\usepackage{soul}
\usepackage{graphicx}
\usepackage{xcolor}
\usepackage{amssymb, amsmath, amsthm, hyperref, euscript, color}
\usepackage[dvipsnames]{xcolor}
\usepackage{amscd}
\usepackage{tikz-cd}
\usepackage{bbm}
\usepackage{enumitem}
\usepackage[english]{babel}
\usepackage{mathtools}
\usepackage{makecell}
\usepackage{multirow}

\numberwithin{equation}{section}

\newtheoremstyle{theor}{6pt plus 1pt minus 1pt}{6pt plus 1pt minus 1pt}{\slshape}{}{\bfseries}{.}{5pt plus 1pt minus 1pt}{}
\newtheoremstyle{def}{6pt plus 1pt minus 1pt}{6pt plus 1pt minus 1pt}{}{}{\bfseries}{.}{5pt plus 1pt minus 1pt}{}
\newtheoremstyle{rmk}{6pt plus 1pt minus 1pt}{6pt plus 1pt minus 1pt}{}{}{\bfseries}{.}{5pt plus 1pt minus 1pt}{}
\newtheoremstyle{claim}{6pt plus 1pt minus 1pt}{6pt plus 1pt minus 1pt}{\slshape}{}{\bfseries}{.}{5pt plus 1pt minus 1pt}{}

\theoremstyle{theor}
\newtheorem{newstatement}{newstatement}[section]
\newtheorem{lemma}[newstatement]{Lemma}
\newtheorem{theorem}[newstatement]{Theorem}

\newtheorem{proposition}[newstatement]{Proposition}

\theoremstyle{def}
\newtheorem{definition}[newstatement]{Definition}

\theoremstyle{rmk}
\newtheorem{remark}[newstatement]{Remark}

\newtheorem*{example*}{Example}
\newtheorem*{que*}{Question}
\newtheorem*{rmk*}{Remark}

\theoremstyle{claim}

\theoremstyle{theor}
\newtheorem{thm}{Theorem}

\newtheorem{cor}[thm]{Corollary}

\expandafter\let\expandafter\oldproof\csname\string\proof\endcsname
\let\oldendproof\endproof
\renewenvironment{proof}[1][\proofname]{%
  \oldproof[\slshape #1]%
}{\oldendproof}
\def\provedboxcontents#1{$\square$}

\makeatletter
\newsavebox\myboxA
\newsavebox\myboxB
\newlength\mylenA

\newcommand*\xoverline[2][0.75]{%
    \sbox{\myboxA}{$\m@th#2$}%
    \setbox\myboxB\null
    \ht\myboxB=\ht\myboxA%
    \dp\myboxB=\dp\myboxA%
    \wd\myboxB=#1\wd\myboxA
    \sbox\myboxB{$\m@th\overline{\copy\myboxB}$}
    \setlength\mylenA{\the\wd\myboxA}
    \addtolength\mylenA{-\the\wd\myboxB}%
    \ifdim\wd\myboxB<\wd\myboxA%
       \rlap{\hskip 0.5\mylenA\usebox\myboxB}{\usebox\myboxA}%
    \else
        \hskip -0.5\mylenA\rlap{\usebox\myboxA}{\hskip 0.5\mylenA\usebox\myboxB}%
    \fi}
\makeatother

\makeatletter

\def\pput{\@ifnextchar[{\@put}{\@@rput[\z@,\z@][r]}}
\def\@put[#1]{\@ifnextchar[{\@@put[#1]}{\@@@@@put[#1]}}
\def\@@put[#1][{\@ifnextchar{l}{\@@lput[#1][}{\@@@put[#1][}}
\def\@@@put[#1][{\@ifnextchar{c}{\@@cput[#1][}{\@@@@put[#1][}}
\def\@@@@put[#1][{\@ifnextchar{r}{\@@rput[#1][}{\relax}}
\def\@@@@@put[{\@ifnextchar{l}{\@@lput[\z@,\z@][}{\@@@@@@put[}}
\def\@@@@@@put[{\@ifnextchar{c}{\@@cput[\z@,\z@][}{\@@@@@@@put[}}
\def\@@@@@@@put[{\@ifnextchar{r}{\@@rput[\z@,\z@][}{\@@@@@@@@put[}}
\def\@@@@@@@@put[#1]{\@@rput[#1][r]}

\let\hm@d@\leavevmode

\long\def\@@lput[#1,#2][l]#3{\setbox0\hbox{#3}\hm@d@\raise#2\hbox to\z@{\dimen0 #1%
  \advance\dimen0-\wd0\kern\dimen0\dp0\z@\ht0\z@\wd0\z@\box0\hss}\ignorespaces}
\long\def\@@cput[#1,#2][c]#3{\setbox0\hbox{#3}\hm@d@\raise#2\hbox to\z@{\dimen0 #1%
  \advance\dimen0-.5\wd0\kern\dimen0\dp0\z@\ht0\z@\wd0\z@\box0\hss}\ignorespaces}
\long\def\@@rput[#1,#2][r]#3{\setbox0\hbox{\kern#1\raise#2\hbox{#3}}%
  \dp0\z@\ht0\z@\wd0\z@\hm@d@\box0\ignorespaces}

\def\widebar{\ifx\math@version\@@bold
  \let\@widebar\@@@widebar\else\let\@widebar\@@widebar\fi\@widebar}

\def\@@widebar#1{\hbox{\setbox15\hbox{$#1$}%
  \dimen15 0.45\wd15\advance\dimen15 0.15\ht15%
  \dimen16\ht15\advance\dimen16 0.00em\advance\dimen16 0.3ex%
  \dimen17 0.65\wd15\advance\dimen17 0.05\ht15\advance\dimen17 0.1ex%
  \dimen18 0.035em\advance\dimen18 0.00ex
  \pput[\dimen15,\dimen16][c]{\vrule depth 0pt height \dimen18 width \dimen17}}#1}

\def\@@@widebar#1{\hbox{\setbox15\hbox{$#1$}%
  \dimen15 0.45\wd15\advance\dimen15 0.15\ht15%
  \dimen16\ht15\advance\dimen16 0.00em\advance\dimen16 0.26ex%
  \dimen17 0.65\wd15\advance\dimen17 0.05\ht15\advance\dimen17 0.1ex%
  \dimen18 0.05em\advance\dimen18 0.00ex
  \pput[\dimen15,\dimen16][c]{\vrule depth 0pt height \dimen18 width \dimen17}}#1}

\makeatother

\newcommand{\Z}{\mathbb{Z}}

\newcommand{\K}{\mathrm{K}3}

\newcommand{\defeq}{\vcentcolon=}

\DeclareMathOperator{\Int}{Int}

\newcommand{\cs}{\mathbin{\#}}

\newcommand{\CP}{{\mathbb C\mkern-0.5mu\mathrm P}}
\newcommand{\CPbar}{\widebar{{\mathbb C\mkern-0.5mu\mathrm P}}}

\renewcommand{\:}{\,{:}\;}
\DeclareMathOperator{\Tor}{Tor}

\DeclareMathOperator{\GL}{GL}

\DeclareMathOperator{\lk}{lk}

\let\geq\geqslant
\let\leq\leqslant

\newcommand{\trans}[2]
  {({\text{\Large$\scriptstyle#1$}}\mkern8mu{\text{\Large$\scriptstyle#2$}})}

\newcommand{\darrow}[1]
{\mathchoice
{\xlongrightarrow{\hbox{\raisebox{0pt}[0pt][0pt]
  {\smash{$\scriptstyle\mkern1mu#1
  \mkern3mu:\mkern2mu1\mkern1.5mu$}}}}}
{\xlongrightarrow{\hbox{\raisebox{-1.5pt}[0pt][0pt]
  {\smash{$\scriptstyle\mkern2mu#1
  \mkern3mu:\mkern2mu1\mkern2.5mu$}}}}}
{}{}}

\tikzcdset{every label/.append style = {font=\normalsize,inner sep=0.75ex}}
\tikzcdset{every matrix/.append style = {row sep=5ex,column sep=1em}}

\newcommand{\Case}[1]{\par\noindent\textsl{Case #1}}
\newcommand{\Cases}[1]{\par\noindent\textsl{Cases #1}}

\begin{document}

\title[Branched coverings of simply connected $4$-manifolds]{Branched coverings\\of simply connected $4$-manifolds}

\author{Valentina Bais}
\address{Scuola Internazionale Superiore di Studi Avanzati (SISSA), Via Bo\-nomea 265\\34136\\Trieste\\Italy.}
\email{vbais@sissa.it}

\author{Riccardo Piergallini}
\address{Scuola di Scienze e Tecnologie,
Università di Camerino\\Via Madon\-na delle Carceri 9\\62032 Camerino\\Italy.}
\email{riccardo.piergallini@unicam.it}

\author{Daniele Zuddas}
\address{Dipartimento di Matematica, Informatica e Geoscienze\\Università degli Studi di Trieste \\Via Valerio 12/1\\34127\\Trieste\\Italy.}
\email{dzuddas@units.it}

\subjclass[2020]{Primary 57M12; Secondary 57K40}

\keywords{Branched covering, intersection lattice, intersection form}

\begin{abstract}
We show that, given $d \geq 4$ and two closed connected oriented PL $4$-manifolds $M$ and $N$ such that $N$ has a handle decomposition with no $1$- and $3$-handles, there exists a $d$-fold (simple) branched covering $p \colon M \darrow{d} N$ if and only if there is an isometric embedding of lattices $d \cdot I_N \hookrightarrow I_M$. Here $I_N$ and $I_M$ respectively denote the intersection lattices of $N$ and $M$. In particular, we characterize the manifolds which are branched covers of the $\K$ surface.
\end{abstract}

\maketitle

\kern-12pt

\section{Introduction}

By a classical result of Alexander \cite{A1920}, any closed connected oriented PL $n$-manifold $M$ admits a branched covering
\[p \colon M \darrow{d} S^n\]
of some degree $d \geq 1$. One possible interpretation of this fact is that any such manifold $M$ can be presented by means of a codimension two closed subpolyhedron $B_p \subset S^n$, i.e. the branch set, and a group homomorphism $\omega_p \colon \pi_1(S^n \setminus B_p) \rightarrow S_d$, i.e. the monodromy representation of the unbranched part of $p$, where $S_d$ denotes the permutation group of $d$ elements. We remark that such a presentation is not optimal, since the degree $d$ depends on the cardinality of a chosen triangulation of $M$ and the branch set $B_p$ is not a PL submanifold. However, the following improvements of Alexander's result in low dimensions are known. The easiest case is when $M = \Sigma_g$ is a closed oriented surface of genus $g$: one gets a simple $d$-fold branched covering $\Sigma_g \darrow{d} S^2$ for any degree $d \geq 2$ by stabilizing the quotient map induced by the hyperelliptic involution. Moving to the case $n=3$, Hilden \cite{Hi}, Hirsch \cite{Hirsch} and Montesinos \cite{M} independently proved that any closed oriented $3$-manifold can be represented by means of a $3$-fold dihedral covering of $S^3$ branched over a knot. The $4$-dimensional case has been considered by Piergallini and Iori \cite{P,PI}, who showed that any closed connected oriented PL $4$-manifold $M$ can be represented by a simple $d$-fold branched covering $p\colon M \darrow{d} S^4$ for any $d \geq 4$, where the branch set $B_p \subset S^4$ is a locally flat PL embedded surface if $d \geq 5$ while it is a locally flat PL self-transversally immersed surface if $d=4$. The case when $d=3$ and $M$ has no 1-handles has been studied in \cite{BCKM24} using trisections.

In general, one would hope to give a complete answer to the following question.

\begin{que*}
    Given two PL $n$-manifolds $M$ and $N$ and a natural number $d \geq 2$, is there a $d$-fold branched covering
    \[M \darrow{d} N?\]
\end{que*}

Criteria for the existence of a simple $d$-fold branched covering\break $p \colon M \darrow{d} N$ with $N = \cs_{k_1} \CP^2 \cs_{k_2}\! \CPbar^2$ or $N = \cs_{k} (S^2 \times S^2)$ are given by Piergallini and Zuddas \cite{PiergalliniZuddas1} in terms of the Betti numbers $b_2^\pm(M)$. Their proof heavily relies on the fact that the $4$-manifold $N$ is the union of a plumbing of disk bundles over $2$-spheres with a single $4$-handle. On the other hand, by a result of Neumann and Weintraub \cite{Neumann}, the connected sums $N$ as above are the only $4$-manifolds which can be obtained by such a plumbing. As a consequence, the approach followed in \cite{PiergalliniZuddas1} cannot be extended to other choices of~$N$.

We started this work by thinking about the branched coverings of the $\K$ surface, which together with the basic manifolds $\CP^2$, $\CPbar^2$ and $S^2 \times S^2$, are the main building blocks needed to construct all the possible homeomorphism types of closed simply-connected PL $4$-mani\-folds by taking connected sums.

Then we came up with new ideas that allowed us to generalize the above mentioned results of \cite{PiergalliniZuddas1} to all the closed 4-manifolds $N$ which admit a handle decomposition without 1- and 3-handles, thus providing a partial answer to the above Question for simply connected PL $4$-manifolds. We recall that it is a long standing open problem in low dimensional topology that any closed PL simply connected $4$-manifold $N$ admits a handle decomposition with no $1$- and $3$-handles, see \cite[Problem 4.18]{KirbyList} in  Kirby's list.

\begin{thm}\label{thmA}
   Let $M$ and $N$ be two closed connected oriented PL $4$-manifolds such that $N$ admits a handle decomposition with no $1$- and $3$-handles. For any $d \geq 4$, there exists a $d$-fold branched covering \[p\colon M \darrow{d} N\] if and only if there exists an isometric embedding of lattices
   \[d \cdot I_N \hookrightarrow I_M,\]
   where $I_N$ and $I_M$ respectively denote the intersection lattices of $N$ and $M$. Moreover, if this is the case, one can build $p$ in such a way that it is simple and its branch set $B_p \subset N$ is a locally flat PL embedded surface if $d \geq 5$ while it is a locally flat PL self-transversally immersed surface if $d=4$.
\end{thm}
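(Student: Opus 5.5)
The argument splits into necessity and sufficiency. For necessity, let $p\colon M \darrow{d} N$ be a branched covering. The transfer homomorphism $p^!\colon H_2(N) \to H_2(M)$ is the Poincaré dual of the pull-back $p^*$ on $H^2$. It satisfies $p^!(a)\cdot p^!(b) = d\,(a\cdot b)$ and $p_*\circ p^! = d\cdot \mathrm{id}$. Because the branch set has codimension two, the degree-$d$ map $p$ pulls back the fundamental cohomology class to $d$ times the fundamental class. Hence $p^*(\alpha)\cup p^*(\beta) = p^*(\alpha\cup\beta)$ evaluates on $[M]$ to $d\langle \alpha\cup\beta,[N]\rangle$. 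Passing to free parts, $p^!$ is therefore an isometric map $d\cdot I_N \to I_M$. It is injective, because $d\cdot I_N$ is nondegenerate (or, equivalently, because $p_* p^! = d$ and the lattices are torsion free). This gives the required embedding.

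For sufficiency, suppose $\varphi\colon d\cdot I_N \hookrightarrow I_M$ is given, and fix a handle decomposition $N = H^0 \cup \bigcup_{i} H^2_i \cup H^4$ with framed attaching link $L \subset S^3$. Since $N$ has no $1$-handles, $I_N$ is free with basis the classes $e_i$ of the $2$-handles, and its Gram matrix is the linking/framing matrix of $L$. Let $x_i = \varphi(e_i) \in H_2(M)$, so $x_i\cdot x_j = d\,\lk(L_i,L_j)$ and $x_i\cdot x_i$ is $d$ times the framing. The plan has four steps.
\begin{enumerate}
\item Construct a simple $d$-fold covering of $N_2 = H^0\cup\bigcup_i H^2_i$, branched over a ribbon surface, whose total space $W$ is a $4$-dimensional $2$-handlebody containing classes realizing the $x_i$. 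We take a suitable covering $S^3 \to S^3$ branched over a link $\partial$-adjacent to $L$ and lift each $2$-handle $H^2_i$. Choosing the attaching curve of $L_i$ so that its preimage consists of one component of degree $d$ plus trivially covered pieces, the lift contributes one handle homologous to $x_i$ and some extra handles. Using the techniques of \cite{PiergalliniZuddas1} (lifting framed handles through simple coverings and sliding over the branch set), the framings and mutual linkings of the lifted handles can be adjusted so that $W$ has intersection lattice $\varphi(d\cdot I_N) \oplus U$. Here $U$ is a complementary unimodular-friendly piece that we may arrange freely, and which includes hyperbolic or $\pm1$ summands.
\item Cap off with the $4$-handle. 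The boundary $\partial W$ covers $\partial N_2 = S^3$. We need to extend $\partial W \to S^3$ to a branched covering $W' \to B^4$ for some $W'$, and we do this by making the branch link in $S^3$ bound a ribbon surface in $B^4$ (or using the Montesinos-type extension of coverings of $S^3$ to $B^4$). This yields a closed $M' = W \cup W'$ with a $d$-fold covering $M' \to N$ and an embedding $d\cdot I_N \hookrightarrow I_{M'}$ that extends $\varphi$ up to isometry.
\item Adjust $M'$ to $M$. By the result of \cite{P,PI} and the stabilization moves of \cite{PiergalliniZuddas1}, the covering can be modified within a $4$-ball of $N$. This amounts to taking the connected sum of the covering with coverings $X \darrow{d} S^4$, which replaces $M'$ by $M' \cs X$, together with moves that convert $\CP^2\cs\CPbar^2$ summands, and so on. These operations let us realize every $M$ whose lattice contains the orthogonal complement data compatible with $\varphi$. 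Concretely, we must show that $I_M \cong \varphi(d\cdot I_N)$-saturation-compatible with $I_{M'} \oplus I_X$ for some closed $X$. We also need to handle the fundamental group and torsion: $M$ need not be simply connected, but its non-simply-connected part can be absorbed by a connected sum with a covering $X \to S^4$ that carries $\pi_1$.
\item Address the branch set properties. These come from \cite{P,PI}, after arranging the local models to be simple, with flat singularities removable for $d\ge5$ and nodes remaining for $d=4$.
\end{enumerate}

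The main obstacle is Step 1 together with the lattice matching in Step 3. We have to control the orthogonal complement of $\varphi(d\cdot I_N)$ in $I_M$, which may be indefinite or definite and need not split off. We also have to ensure that the constructed cover realizes exactly the given primitive-closure of $\varphi(d\cdot I_N)$, not just an abstractly isometric copy. We would first try to reduce to the case where $M$ is obtained from $M'$ by connected sums with $\pm\CP^2$ and $S^2\times S^2$, using the classification of indefinite odd/even forms and stabilization. We would then treat definite complements directly, via extra lifted handles in Step 1 whose framings realize a chosen basis of the complement.
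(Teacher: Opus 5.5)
Your necessity argument is fine. It is the cohomological version of the paper's Lemma \ref{lem:necessary}, which takes preimages of surfaces transverse to $B_p$.

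\textbf{The gap in sufficiency (Steps 2--3).} Your construction produces an auxiliary closed manifold $M' = W\cup W'$ whose topology the construction dictates. In particular, $W'$ is whatever branched cover of $B^4$ over your ribbon surface happens to come out. You then hope to reach the given $M$ by connected sums with covers $X\to S^4$ and stabilization moves. This cannot work in general, because closed PL $4$-manifolds are not determined by their intersection lattices and need not split as connected sums at all.

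\begin{itemize}
\item Take $N=\CP^2$, $d=4$ and $M=\K$. The vector $e_1+2e_2$ in a hyperbolic summand gives $4\cdot\langle 1\rangle\hookrightarrow I_{\K}$, yet $\K$ is irreducible. So $M$ is of the form $M'\cs X$ only if your construction already produced $M'=M$, and nothing in it controls that.
\item For the same reason, $\pi_1(M)$ cannot be ``absorbed'' into a summand when $M$ is not a connected sum.
\item At the lattice level, $\varphi(d\cdot I_N)$ has discriminant $\pm d^{\,b_2(N)}$, so it is not unimodular when $b_2(N)>0$. Hence it is never an orthogonal direct summand of $I_M$, and the bookkeeping ``$\varphi(d\cdot I_N)\oplus U$'' in Step 1 does not describe how it sits in $I_M$.
\end{itemize}

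You correctly flag this as the main obstacle, but the proposal gives no mechanism to overcome it.

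\textbf{The missing idea: build the covering inside $M$ and never identify the rest of $M$.}

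\begin{enumerate}
\item Represent $\varphi(\alpha_i)$ by locally flat surfaces $S_i\subset M$ whose mutual intersections are transverse double points, geometric equal to algebraic. Take a $4$-ball $B^4\subset M$ containing all the double points.
\item Let $q_0\colon B^4\to B^4$ be the simple $d$-fold cover branched over $d-1$ trivial disks, with each $L_i$ carrying a $d$-cycle monodromy. Then $q_0^{-1}(L_i)$ is connected, and $q_0^{-1}(L)$ has linking matrix $d\cdot A$.
\item Use Lemma \ref{lem:bands} to arrange that $S\cap\partial B^4$ is exactly $q_0^{-1}(L)$. The band moves are realized by tubing the $S_i$, which leaves their homology classes unchanged. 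The framing $d a_{ii}$ then matches the self-intersection of $S_i$.
\item Extend $q_0$ over $\nu(S_i)\setminus\Int B^4\cong\Sigma_i\times B^2$ by $p_i\times\mathrm{id}_{B^2}$. This gives $q_2\colon B^4\cup\nu(S)\to N_2$.
\item The remaining piece $M\setminus\Int(B^4\cup\nu(S))$ is an arbitrary compact $4$-manifold, and the relative filling result, Theorem \ref{thm:fillable}, handles it. Since $N$ has no $3$-handles, $\partial N_2=S^3$. The branch link of $q_2|_\partial$ visibly bounds labeled planar surfaces in $B^4$, so $[q_2|_\partial]=0$ in $\Gamma_d(S^3)$. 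Hence for $d\geq 4$ the boundary cover extends over this complement as a cover of the $4$-handle, with the stated branch-set properties.
\end{enumerate}

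This is what makes the orthogonal complement of $\varphi(d\cdot I_N)$, the torsion and $\pi_1(M)$ irrelevant, which is exactly what your Step~3 cannot achieve.
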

In Proposition \ref{prop:inequalities}, we show that there exists an isometric embedding $d \cdot I_N \hookrightarrow I_M$ for some integer $d \geq 1$ if and only if $b_2^{\pm}(N) \leq b_2^{\pm}(M)$, and we provide a list of achievable values of $d$, depending on the parity and the signature of the two intersection forms $\beta_N$ and $\beta_M$. This provides an alternative proof of \cite[Theorem 1.3, (a) and (b)]{PiergalliniZuddas1}.

\medskip

The next corollary follows from Theorem \ref{thmA} combined with a theorem of Hopf \cite{Hopf}, see also \cite[Proposition 2.6]{Harpe}. In particular, it provides a criterion to check the existence of a dominant map $f \colon M \rightarrow N$ when $N$ is a closed oriented smooth $4$-manifold admitting a handle decomposition with no $1$- and $3$-handles, cf. \cite[Question 2.2]{Harpe}. 
\begin{cor}\label{cor:b}
    Let $M$ and $N$ be as in Theorem \ref{thmA}. Then there exists a branched covering $p \colon M \rightarrow N$ if and only if there is a continuous dominant map $f \colon M \rightarrow N$ of positive degree. Moreover, if the degree of $f$ is $\deg f \geq 4$, $f$ is homotopic to a branched covering.
\end{cor}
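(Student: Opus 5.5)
The plan is to reduce the corollary to Theorem \ref{thmA} by means of an algebraic statement about maps of nonzero degree between closed oriented $4$-manifolds. The easy direction comes first. A branched covering $p \colon M \darrow{d} N$ is a continuous map, it is surjective, and its degree is $d \geq 1$ because $p$ is orientation preserving on the unbranched part. Hence $p$ itself is a dominant map of positive degree.

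For the converse, suppose $f \colon M \to N$ has degree $d = \deg f > 0$. The key step is to produce an isometric embedding of lattices $d \cdot I_N \hookrightarrow I_M$ from $f$. I will use the umkehr (transfer) map $f_! = \mathrm{PD}_M \circ f^* \circ \mathrm{PD}_N^{-1} \colon H_2(N) \to H_2(M)$. By naturality of the cap product, $f_* f_! = d \cdot \mathrm{id}$. In cohomological terms, for $x, y \in H^2(N)$ we have
\[
\langle f^*x \cup f^*y, [M] \rangle = \langle x \cup y, f_*[M] \rangle = d\,\langle x \cup y, [N] \rangle .
\]
Thus $f^* \colon H^2(N)/\mathrm{Tor} \to H^2(M)/\mathrm{Tor}$ multiplies the intersection form by $d$. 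It is injective modulo torsion because $\beta_N$ is unimodular, hence nondegenerate, and $d \neq 0$. Passing through Poincaré duality and the identification of the intersection lattices with the free parts of $H^2$, this gives the isometric embedding $d \cdot I_N \hookrightarrow I_M$. I will also note that $N$ is simply connected, since it has no $1$-handles, so $H^2(N)$ is free. The torsion of $M$ plays no role, because $I_M$ is defined on $H_2(M)$ modulo torsion.

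With the embedding in hand, there are two cases. If $d \geq 4$, Theorem \ref{thmA} gives a $d$-fold branched covering $p \colon M \to N$. For the first assertion of the corollary, however, $d$ may be $1$, $2$ or $3$. I would then use Proposition \ref{prop:inequalities}: the embedding $d \cdot I_N \hookrightarrow I_M$ implies $b_2^\pm(N) \leq b_2^\pm(M)$, and conversely these inequalities give an embedding $d' \cdot I_N \hookrightarrow I_M$ for some $d'$ in the stated list of achievable degrees. I expect this list to always contain some $d' \geq 4$. For instance, perfect squares $d' = k^2$ should be achievable whenever $d' = 1$ is, since scaling by $k$ turns an embedding of $I_N$ into one of $k^2 \cdot I_N$, and odd $d'$ should be available when parities differ. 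Applying Theorem \ref{thmA} with such a $d'$ then yields a branched covering.

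For the "moreover" part, I assume $\deg f = d \geq 4$ and take the branched covering $p \colon M \to N$ of degree $d$ given by Theorem \ref{thmA}. Then $f$ and $p$ have the same degree. Since $N$ is simply connected, Hopf's theorem (\cite{Hopf}, \cite[Proposition 2.6]{Harpe}) says that homotopy classes of maps $M \to N$ of closed oriented $4$-manifolds with $N$ simply connected are classified by their degree, so $f \simeq p$. The step I expect to be the main obstacle is precisely this one. Hopf's classification by degree holds for maps into $S^n$, whereas for a general simply connected $N$ homotopy classes of maps $M \to N$ are not determined by degree alone. I would therefore check exactly what \cite[Proposition 2.6]{Harpe} asserts. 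If it only yields an embedding of forms, I would try to adjust $p$ so that it induces the same map on $H^2$ as $f$. This is possible because the embedding in Theorem \ref{thmA} can be prescribed to be $f^*$, provided the construction realizes any given embedding. The remaining obstructions to a homotopy between $f$ and $p$ would then be handled by obstruction theory, using that $N$ has cells only in dimensions $0$, $2$ and $4$.
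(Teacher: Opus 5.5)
Your proof of the first assertion is correct and follows the paper. The identity $\langle f^*x\cup f^*y,[M]\rangle=d\,\langle x\cup y,[N]\rangle$ is the content of the Hopf result the paper cites. It gives $d\cdot I_N\hookrightarrow I_M$, and Theorem~\ref{thmA} does the rest. You are right that degrees $1,2,3$ need an extra step, which the paper does not spell out. The clean fix is to multiply your embedding by $2$, which gives $4d\cdot I_N\hookrightarrow I_M$ (the $h^2d$ clause of Proposition~\ref{prop:inequalities}). Drop the aside that odd degrees are available when the parities differ. If $\beta_N$ is odd and $\beta_M$ is even, $d\,x^2$ must be even for every $x$, so $d$ is forced to be even.

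The ``moreover'' part has a genuine gap. The cited Hopf theorem is the injectivity of $f^*$ that you reproved, not a classification by degree, and no such classification holds. For example, $[z_0:z_1:z_2]\mapsto[z_0^2:z_1^2:z_2^2]$ and its composite with complex conjugation are degree-$4$ branched coverings of $\CP^2$ that induce $\pm2$ on $H^2$, so they are not homotopic.

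Your fallback only gets halfway. Matching $H^2$ is achievable: let $S_i$ represent the integral classes $\mathrm{PD}_M f^*\mathrm{PD}_N^{-1}(\alpha_i)$. In the proof of Theorem~\ref{thmA}, the $p$-preimage of the surface representing $\alpha_i$ differs from $S_i$ only inside $B^4$, so $p^*=f^*$ on $H^2(N;\Z)$. But this only says that $f$ and $p$ agree after composing into $K(\pi_2N,2)$. The remaining difference classes live in $H^3(M;\pi_3N)$ and $H^4(M;\pi_4N)$. They are not detected by the degree, and having cells only in dimensions $0,2,4$ does not make them vanish.

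Here is a concrete failure. Take $M=N=S^2\times S^2$ and $p=s\times s$ with $\deg s=2$. Let $f$ be $p$ with its first component altered on the top cell by the generator $\eta\circ\Sigma\eta$ of $\pi_4(S^2)\cong\Z/2$. Then $f^*=p^*$ and $\deg f=4$, but $f\not\simeq p$. By Pontryagin's classification this $\pi_4(S^2)$-action is free, because $M$ is spin and $p^*$ is divisible by $2$. (For $N=\CP^2$ your argument does go through, since $\pi_3(\CP^2)=\pi_4(\CP^2)=0$.)

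So you would need a new ingredient: that the choices in the construction (surfaces, tubes, filling over the $4$-handle) can realize every homotopy class over a given $f^*$. The paper does not supply this either. Its proof stops after invoking Theorem~\ref{thmA}, which yields a branched covering of degree $\deg f$ but says nothing about its homotopy class.
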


\begin{rmk*}
   Corollary \ref{cor:b} provides a new proof of a particular case in \cite[Theorem 3]{DW2003}. More precisely, Duan and Wang show that, given two closed connected oriented $4$-manifolds $M$ and $N$ with $N$ simply connected, there exists a dominant map $f\colon M \rightarrow N$ of degree $d$ if and only if there is an isometric embedding of lattices $d \cdot I_N \hookrightarrow I_M$. The two results coincide when either $I_N$ is odd or $I_N$ is even and $b_2(M) \geq 11/8  \,|\sigma(M)|$, since in both cases $N$ is homeomorphic to a $4$-manifold with no $1$-and $3$-handles. This follows from Freedman's classification of simply connected topological $4$-manifolds \cite{Freedman}, Donaldson's diagonalization theorem of simply connected definite smooth $4$-manifolds \cite{Do83} and Serre's classification of integral unimodular bilinear forms \cite{Se73,MH73}.
\end{rmk*}

As an application of Theorem \ref{thmA} and Proposition \ref{prop:inequalities}, we have the following result.
\begin{cor}\label{cor:c}
    Let $M$ be a closed connected oriented PL $4$-manifold. Then there exists a $d$-fold branched covering
    \[p \colon M \darrow{d} \K\]
    for a suitable integer $d \geq 1$ if and only if $b_2^+(M) \geq 3$ and $b_2^-(M)\geq 19$. Moreover, if this is the case, the covering $p$ can be assumed to be simple and its degree can be any of the following: any even $d \geq 4$ if $\beta_M$ is odd; any integer $d\geq 4$ if $\beta_M$ is even. Finally, if this is the case, one can build $p$ in such a way that it is simple and its branch set $B_p \subset \K$ is a locally flat PL embedded surface if $d \geq 5$ while it is a locally flat PL self-transversally immersed surface if $d=4$.
\end{cor}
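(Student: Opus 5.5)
We derive Corollary \ref{cor:c} from Theorem \ref{thmA} together with a direct lattice analysis for $N=\K$. The $\K$ surface admits a handle decomposition with no $1$- and $3$-handles; this is classical, for instance via its Kirby diagram as an elliptic surface $E(2)$. So Theorem \ref{thmA} applies with $N=\K$. Its intersection lattice is $I_{\K}\cong 2E_8(-1)\oplus 3H$, which is even, indefinite, and has $b_2^+=3$ and $b_2^-=19$. For $d\ge 4$, the existence of a $d$-fold simple covering with the stated branch set properties is therefore equivalent to the existence of an isometric embedding $d\cdot I_{\K}\hookrightarrow I_M$.

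For necessity, suppose $p\colon M\to\K$ is a branched covering of some degree $d\ge1$. Then $p^*$ on $H^2$ with rational coefficients is injective and multiplies the cup product form by $d$, since $p_*[M]=d[\K]$. This gives an embedding $d\cdot I_{\K}\hookrightarrow I_M$ (modulo torsion), and hence $b_2^\pm(M)\ge b_2^\pm(\K)$, i.e. $b_2^+(M)\ge 3$ and $b_2^-(M)\ge19$. Alternatively, we can quote the "only if'' direction of Proposition \ref{prop:inequalities}. If $d<4$ occurs we do not need to exclude it; we only need the inequalities.

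For sufficiency and the list of degrees, assume $b_2^+(M)\ge3$ and $b_2^-(M)\ge19$. By Proposition \ref{prop:inequalities}, embeddings $d\cdot I_{\K}\hookrightarrow I_M$ exist, with the achievable $d$ depending on the parity and signature of $\beta_{\K}$ and $\beta_M$. We specialize that list to $\beta_{\K}$ even and indefinite.

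If $\beta_M$ is even, then $I_M\cong a E_8(\pm1)\oplus bH$. The inequalities on $b_2^\pm$ force $I_M\supseteq 2E_8(-1)\oplus 3H$ as an orthogonal summand up to sign conventions, using Serre's classification and enough hyperbolic summands. It therefore suffices to embed $d\cdot(2E_8(-1)\oplus3H)$ into $2E_8(-1)\oplus3H$ for every $d$. For $d\cdot H\hookrightarrow H$ we use $e\mapsto e$, $f\mapsto df$. For $d\cdot E_8(-1)$ we embed into $E_8(-1)\oplus H$-type lattices, which is what Proposition \ref{prop:inequalities} supplies in the even case.

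If $\beta_M$ is odd, then $I_M\cong\langle1\rangle^{b^+}\oplus\langle-1\rangle^{b^-}$. Now $d\cdot I_{\K}$ is even, and an even form scaled by $d$ embeds into a diagonal odd lattice for even $d$. For example, $2\cdot H$ embeds into $\langle1\rangle\oplus\langle-1\rangle$ via $e\mapsto u+v$, $f\mapsto u-v$. For $2E_8(-1)$ we use the standard embedding of $E_8(2)$-type lattices into $\langle-1\rangle^8$. Scaling by $d=2k$ then follows by composing with $k\cdot\langle\pm1\rangle\hookrightarrow\langle\pm1\rangle^{\,\cdot}$ after a sum of squares argument, or directly from Proposition \ref{prop:inequalities}.

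Finally, we restrict to $d\ge4$ and invoke Theorem \ref{thmA} to obtain a simple covering with the described branch set.

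The main obstacle is reading off the exact degree list from Proposition \ref{prop:inequalities}. In particular, in the odd case we must check that only the rank bounds $b_2^\pm(M)\ge b_2^\pm(\K)$ are needed, with no extra room, for every even $d\ge4$. We also need to verify that the embedding $d\cdot E_8\hookrightarrow\langle-1\rangle^8$ exists for all even $d$, using $d=2k$ with $k$ a sum of four squares.
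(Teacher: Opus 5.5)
Your proposal is correct and is essentially the paper's proof. Since $\K$ has a handle decomposition without $1$- and $3$-handles, Theorem~\ref{thmA} reduces the question to the existence of embeddings $d\cdot I_{\K}\hookrightarrow I_M$, with Lemma~\ref{lem:necessary} (or your equivalent cohomological argument) covering necessity in every degree; Proposition~\ref{prop:inequalities} with $\beta_{\K}$ even (cases 5 and 6, plus its converse) then gives exactly the stated inequalities and degree lists.

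Your hand-built embeddings are unnecessary and imprecise in places, so you should simply cite Proposition~\ref{prop:inequalities}, as you also suggest. In the even case, once all three $H$ summands are used, you must embed $d\cdot(E_8\oplus E_8)$ into $E_8\oplus E_8$. The paper does this summandwise via the octonion similarity $d\cdot E_8\hookrightarrow E_8$ of Lemma~\ref{lem:E82}, not via "$E_8\oplus H$-type" targets. In the odd case, the sum-of-squares rescaling of the $E_8$ part has to be done blockwise, $k\cdot\langle -1\rangle^{4}\hookrightarrow\langle -1\rangle^{4}$ via quaternion multiplication, because $\langle k\rangle$ does not embed in $\langle 1\rangle$ for non-square $k$.
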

The next corollary is an immediate consequence of Theorem \ref{thmA} and Proposition \ref{prop:inequalities}.

\begin{cor}\label{cor:d}
    Let $N$ be a closed connected oriented PL $4$-manifold admitting a handle decomposition with no $1$- and $3$-handles and let $N'$ be any smooth $4$-manifold homeomorphic but not necessarily diffeomorphic to $N$. Then there exists a $d$-fold branched covering
    \[p \colon N' \darrow{d} N\]
    for a suitable $d \geq 4$, according to Proposition \ref{prop:inequalities}. In particular, the value $d=4$ is always allowed. Moreover, if this is the case, one can build $p$ in such a way that it is simple and its branch set $B_p \subset N$ is a locally flat PL embedded surface if $d \geq 5$ while it is a locally flat PL self-transversally immersed surface if $d=4$.
\end{cor}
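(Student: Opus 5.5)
The plan is to reduce Corollary \ref{cor:d} to Theorem \ref{thmA}: it suffices to produce an isometric embedding of lattices $d \cdot I_N \hookrightarrow I_{N'}$ for suitable $d \geq 4$, and in particular for $d = 4$. First I would fix the structure of $N$ and $N'$. Since $N$ admits a handle decomposition without $1$- and $3$-handles, it is simply connected. A smooth manifold $N'$ homeomorphic to $N$ is PL, closed, connected and oriented, and I would orient it so that the homeomorphism preserves orientation. Intersection forms are homotopy invariants, so the homeomorphism induces an isometry $I_{N'} \cong I_N$. Hence the problem is purely algebraic: for which $d$ does $d\cdot I_N$ embed isometrically into $I_N$ itself?

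Next I would apply Proposition \ref{prop:inequalities} with $M$ replaced by $N'$. The hypothesis $b_2^\pm(N) \leq b_2^\pm(N')$ holds with equality, so some embedding exists. Both forms have the same parity and the same signature, and the proposition's list then gives the admissible degrees. This accounts for the clause ``for a suitable $d \geq 4$, according to Proposition \ref{prop:inequalities}''.

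The key point is that $d=4$ is always allowed, and this can be checked directly. Write $I_N = (L,\beta)$. The map $x \mapsto 2x$ from $L$ to $L$ satisfies $\beta(2x,2y) = 4\beta(x,y)$. It is therefore an isometric embedding $4\cdot I_N \hookrightarrow I_N \cong I_{N'}$, and it is injective because $L$ is free. More generally, $x\mapsto kx$ realizes every perfect square $d=k^2$, for any $N$ and independently of parity. Theorem \ref{thmA} then yields a simple $4$-fold branched covering $N' \darrow{4} N$ with the stated properties of the branch set, namely a self-transversally immersed surface for $d=4$ and an embedded one for $d\geq5$.

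There is no serious obstacle. The only care needed is to verify the hypotheses of Theorem \ref{thmA}: $N'$ is PL because smooth manifolds admit compatible PL structures, and orientations must be matched so that the isometry holds with $I_{N'}$ rather than $-I_{N'}$. The branch-set conclusions are then inherited verbatim from Theorem \ref{thmA}.
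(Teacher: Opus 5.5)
Your proposal is correct and follows the paper's omitted (``trivial'') argument. An orientation-preserving homeomorphism gives $I_{N'}\cong I_N$, so Proposition \ref{prop:inequalities} applies with equality in the Betti number inequalities, and $d=4$ comes from the trivially attainable $d=1$ via the rescaling clause $h^2d$ with $h=2$, which is exactly your map $x\mapsto 2x$; Theorem \ref{thmA} then gives the simple covering with the stated branch-set properties.
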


The paper is organized as follows: the next three sections contain basic definitions and technical preliminary results; Theorem \ref{thmA} is proved in Section \ref{proofs/sec}; finally, Corollaries \ref{cor:b} and \ref{cor:c} are proved in Section \ref{proofcorBC}. The proof of Corollary \ref{cor:d} is trivial and is hence omitted.

\subsection*{Acknowledgments}

The authors acknowledge GNSAGA – Istituto Nazionale di Alta Matematica ``Francesco Se\-ve\-ri'', Italy.

\subsection*{Notation and conventions}
We work in the category of PL manifolds. Notice that this coincides with the category of smooth manifolds for dimension $\leq 4$.

If $M$ is an oriented manifold, we indicate by $\widebar M$ the same manifold with reversed orientation.

Given a locally flat PL immersed submanifold $S \subset N$, we denote by $\nu(S) \subset N$ a regular neighborhood of $S$ in $N$. This is diffeomorphic to the total space of the normal disk bundle of $S$ when $S$ is locally flat PL embedded and to a self-plumbing of such bundle when $S$ is locally flat PL self-transversally immersed and $\dim (S)=\dim (N) /2$.

\newpage

\section{Branched coverings}\label{BC/sec}

We start this section by recalling the definition of branched covering in the PL category.

\begin{definition}
    Given two compact PL $n$-manifolds $M$ and $N$, a PL map
    \[p\colon M \darrow{d} N\]
    is called a \textsl{$d$-fold branched covering} if it is non-degenerate and it restricts to a $d$-fold ordinary covering over the complement of a codimension $2$ closed subpolyhedron of $N$. The \textsl{branch set} of $p$ is the smallest subpolyhedron $B_p \subset N$ with this property, while the \textsl{degree} $d=d(p)$ of $p$ is the cardinality of the preimage of any point in $N \setminus B_p$.

    Moreover, $p$ is called a \textsl{simple} branched covering, if the monodromy of any meridian of the branch set $B_p$ is a transposition.

    A $d$-fold branched covering as above is fully determined up to PL homeomorphisms by the pair $(N,B_p)$ and the \textsl{monodromy} representation 
    \[\omega_p\colon \pi_1(N \setminus B_p) \longrightarrow S_d\] 
    associated to the ordinary covering $p|\colon M \setminus p^{-1}(B_p) \darrow{d} N \setminus B_p$, where $S_d$ denotes the permutation group of $\{1,\dots,d\}$ (see \cite{Fox}). 
\end{definition}

\begin{remark} 
The monodromy of a given branched covering $p$ is usually described by means of its values on a given generator system for $\pi_1(N \setminus B_p)$. This is usually chosen to be a Hurwitz system when $N=S^2$ and $B_p$ is a finite set of points. If  $N=S^3$ and $B_p$ is a link, we fix a projection diagram and consider a Wirtinger system of meridians. Attaching to any such generator of $\pi_1(S^n \setminus B_p)$ its image through the monodromy $\omega_p$ is called a \textsl{labeling} or a \textsl{coloring} of $B_p$.
\end{remark}

\section{Intersection forms}

Given a smooth closed oriented $4$-manifold $M$, we denote by \[\beta_M \colon H_2(M;\Z)/\Tor H_2(M;\Z)\times H_2(M;\Z)/\Tor H_2(M;\Z) \longrightarrow \Z\] its intersection form on second homology modulo torsion and by \[I_M \defeq (H_2(M;\Z)/\Tor H_2(M;\Z), \beta_M)\] the integral unimodular inner product intersection space of $M$. If $k$ is a non-zero integer, we set \[k \cdot I_M \defeq (H_2(M;\Z)/\Tor H_2(M;\Z), k \cdot \beta_M).\]

In particular, we set \[\langle k \rangle = k \cdot \langle 1 \rangle\] where $\langle 1 \rangle$ is the rank one positive definite unimodular lattice.
Moreover, we let $H$ and $E_8$ respectively denote the unimodular lattices represented by the homonymous matrices
\[H = \left(\begin{matrix}\, 0 &1\\ \,1 &0\,\end{matrix} \right)
   \quad \text{and} \quad
   E_8 = \left(\begin{matrix}
    \,2 & 1 & 0 & 0 & 0 & 0 & 0 & 0\\
    \,1 & 2 & 1 & 0 & 0 & 0 & 0 & 0\\
    \,0 & 1 & 2 & 1 & 0 & 0 & 0 & 0\\
    \,0 & 0 & 1 & 2 & 1 & 0 & 0 & 0\\
    \,0 & 0 & 0 & 1 & 2 & 1 & 0 & 1\\
    \,0 & 0 & 0 & 0 & 1 & 2 & 1 & 0\\
    \,0 & 0 & 0 & 0 & 0 & 1 & 2 & 0\\
    \,0 & 0 & 0 & 0 & 1 & 0 & 0 & 2\, 
    \end{matrix}\,\right).\]

We start by proving the following lemma, which immediately gives one of the two implications of Theorem \ref{thmA}.

\begin{lemma}\label{lem:necessary}
    Let $p\colon M \darrow{d} N$ be a $d$-fold branched covering between two closed connected oriented PL $4$-manifolds. Then $d\cdot I_N$ admits an isometric embedding in $I_M$.
\end{lemma}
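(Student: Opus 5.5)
The plan is to use the transfer map on (co)homology. A branched covering $p\colon M \to N$ of degree $d$ between closed oriented manifolds has degree $\deg p = d$. This holds because $p$ is non-degenerate and orientation-preserving on the unbranched part, so a regular value in $N\setminus B_p$ has exactly $d$ preimages, each contributing $+1$. The needed homomorphism is the Umkehr (transfer) map
\[p^! = PD_M \circ p^* \circ PD_N^{-1}\colon H_2(N;\Z)\to H_2(M;\Z),\]
where $PD$ denotes Poincaré duality.

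Since $p_*[M]=d[N]$, the projection formula gives, for classes $a,b\in H^2(N;\Z)$,
\[\langle p^*a\cup p^*b,[M]\rangle=\langle a\cup b,p_*[M]\rangle=d\,\langle a\cup b,[N]\rangle .\]
Translating via Poincaré duality, this says $\beta_M(p^!x,p^!y)=d\,\beta_N(x,y)$ for all $x,y\in H_2(N;\Z)$.

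Next, $p^!$ must descend to the quotients by torsion. It maps torsion to torsion, since it is a homomorphism of finitely generated groups. Hence it induces a homomorphism
\[\bar p^!\colon H_2(N)/\Tor\to H_2(M)/\Tor\]
which still satisfies $\beta_M(\bar p^!x,\bar p^!y)=d\,\beta_N(x,y)$.

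It remains to check injectivity, which is the only point needing care. Suppose $\bar p^!x=0$. Then $d\,\beta_N(x,y)=0$ for every $y$. Since $\beta_N$ is unimodular (in particular nondegenerate) on $H_2(N)/\Tor$ and $d\ge1$, this forces $x=0$. Thus $\bar p^!$ is an isometric embedding $d\cdot I_N\hookrightarrow I_M$.

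The main potential obstacle is justifying $\deg p=d$ in the PL setting with the orientation conventions. I would handle it by noting that $p$ restricted to $M\setminus p^{-1}(B_p)$ is an orientation-preserving $d$-sheeted covering. Because $B_p$ has codimension two, the fundamental class can be computed locally at a generic point of $N\setminus B_p$.
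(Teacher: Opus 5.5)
Your argument is correct, but it takes a different route from the paper. The paper argues geometrically. It represents a basis of $H_2(N;\Z)/\Tor$ by embedded surfaces $S_i$ transverse to $B_p$ and takes the preimage surfaces $\widetilde S_i=p^{-1}(S_i)$. It then reads off $\beta_M([\widetilde S_i],[\widetilde S_j])=d\cdot\beta_N([S_i],[S_j])$, implicitly because each intersection point of $S_i$ and $S_j$, which general position keeps off $B_p$, lifts to $d$ intersection points of the same sign.

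You instead work homologically, through the Umkehr map $p^!$ and the projection formula, using only $p_*[M]=d[N]$. The two constructions give the same embedding, since $[p^{-1}(S_i)]=p^!([S_i])$. Your version has three advantages:
\begin{enumerate}
\item it needs no embedded representatives and no transversality with the possibly singular branch set;
\item it applies verbatim to any continuous map of degree $d$, and is essentially the Hopf-type argument the paper itself invokes in the proof of Corollary~\ref{cor:b};
\item it states the injectivity step explicitly, via nondegeneracy of $\beta_N$ on $H_2(N)/\Tor$, which the paper leaves to ``the conclusion follows''.
\end{enumerate}
The paper's version, in turn, produces concrete surfaces realizing the embedding, matching the geometric construction used for the converse.

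Both proofs rely on the same implicit convention that $p$ is orientation-preserving off $B_p$; otherwise one would obtain $-d\cdot I_N$. You were right to flag this. A minor point: any group homomorphism sends torsion to torsion, so finite generation plays no role in that step.
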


\begin{proof}
    Let $S_1, \dots, S_k \subset N$ be PL embedded surfaces representing a basis of the free abelian group $H_2(N;\Z)/ \Tor H_2(N;\Z) \cong \Z^{k}$, where $k=b_2(N)$. Without loss of generality, we can assume that all such surfaces are transverse to the branch set $B_p \subset N$ and we set $\widetilde{S}_i = p^{-1}(S_i)$ for $i=1, \dots, k$. Then $\widetilde S_1, \dots, \widetilde S_k \subset M$ are PL embedded surfaces whose homology classes satisfy \[\beta_M([\widetilde S_i], [\widetilde S_j]) = d \cdot \beta_N([S_i],[S_j])\]for all $i,j=1, \dots, k$. The conclusion follows.
\end{proof}

The next proposition gives a readily verifiable criterion for understanding whether an embedding of lattices as in Theorem \ref{thmA} exists or not, and it provides a non-exhaustive list of values for the degree $d$.

\begin{proposition}\label{prop:inequalities}
    Let $M$ and $N$ be closed connected oriented PL $4$-manifolds. Then there exists a positive integer $d$ such that $d\cdot I_N$ admits an isometric embedding in $I_M$ if and only if $b_2^+(N)\leq b_2^+(M)$ and $b_2^-(N)\leq b_2^-(M)$. Moreover, if these inequalities are satisfied, we can choose $d$ as follows, for every $k\geq 1$.
\begin{center}
\vspace{2mm}
\begin{tabular}{|c|c|c|c|l|}
\hline
\vrule width 0pt height 11pt
\text{Case} & $d$ & $\beta_N$ & $\beta_M$ & \text{extra conditions}\\
\hline & & & & \\[-13pt] \hline
\vrule width 0pt height 11pt
1 & $1$ &  \multirow{4}{2em}{\hfill odd\hfill} & 
\multirow{2}{2em}{\hfill odd\hfill}  & \\ 
\cline{1-2}\cline{5-5} 
\vrule width 0pt height 11pt
2 &  $5$ &   &  &  $b_2(N) \leq b_2(M)/2$\\ 
\cline{1-2}\cline{4-5}
\vrule width 0pt height 11pt
3 &$2k$ &   & \multirow{2}{2em}{\hfill even\hfill}  & $\sigma(M)=0$  \\ 
\cline{1-2}\cline{5-5}
\vrule width 0pt height 11pt
4 & $2, 4, 6$ &   &  & $\sigma(M) \neq 0$\\ 
\hline
\vrule width 0pt height 11pt
5 & $2k$ &  \multirow{2}{2em}{\hfill even\hfill}  & 
odd &  \\ 
\cline{1-2}\cline{4-5}
\vrule width 0pt height 11pt
6 & $k$ &   & {\hfill even\hfill}  & \\  
\hline
\end{tabular}

\vspace{2mm}
    \end{center}
In addition, if $d$ is attainable, then so is $h^2 d$ for all integers $h \geq 1$.
\end{proposition}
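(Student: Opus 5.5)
The plan is to reduce everything to explicit embeddings between standard unimodular lattices, using Serre's classification. Write $b^\pm$ for $b_2^\pm$. An indefinite odd form is $m\langle1\rangle\oplus n\langle-1\rangle$, and an indefinite even form is $aH\oplus bE_8$ with $b$ possibly negative, meaning $|b|$ copies of $-E_8$. Definite forms need not be of this type, and I would handle them separately.

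\emph{Necessity.} Suppose $d\cdot I_N\hookrightarrow I_M$ with $d>0$. Take a maximal positive definite subspace of $I_N\otimes\mathbb{R}$. Its image is positive definite for $\beta_M$, since the form is only scaled by $d>0$. Hence $b^+(N)\le b^+(M)$, and the same argument gives $b^-(N)\le b^-(M)$.

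\emph{Sufficiency and the table.} First note that $I_N$ always embeds isometrically into the diagonal lattice $b^+(N)\langle1\rangle\oplus b^-(N)\langle-1\rangle$ rationally. To get an integral embedding I would use the table cases as follows.

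\textbf{Case 1.} If both forms are odd and indefinite, then $I_N\oplus(\text{diagonal complement})\cong I_M$, so $d=1$ works. If a form is definite, I would use Donaldson-free lattice facts. For instance, any odd definite unimodular lattice embeds in a diagonal one of larger rank only up to scaling. So for definite $N$ or $M$ I would instead route through a scaled case, or assume the relevant form is diagonal. This is the point I am least sure about, and it may require the PL/topological hypotheses used elsewhere in the paper.

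\textbf{Case 2.} Use $5=1^2+2^2$. The map $x\mapsto(x,2x)$ sends $\langle\pm1\rangle$ into $\langle\pm1\rangle^2$ with the form multiplied by $5$. More generally, $\langle5\rangle\hookrightarrow\langle1\rangle^{2}$, so $5I_N\hookrightarrow I_N\oplus I_N$ diagonally via $(x,2x)$. Then $I_N\oplus I_N$ embeds in $I_M$ when $b_2(N)\le b_2(M)/2$ together with the sign counts.

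\textbf{Case 5.} Use $H\hookrightarrow\langle1\rangle\oplus\langle-1\rangle$, given by $e\mapsto u+v$ and $f\mapsto k(u-v)$, which scales the form by $2k$. For $E_8$, use that $E_8\otimes\mathbb{Q}$ is isometric to $8\langle1\rangle\otimes\mathbb{Q}$, and that $2E_8$ embeds in $8\langle1\rangle$ via the $D_8$ description. Combined with $h^2$ scaling, this gives $2k$.

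\textbf{Case 6.} Use $k H\hookrightarrow H$ via $e\mapsto e$, $f\mapsto kf$, and for $E_8$ use $kE_8\hookrightarrow E_8\oplus E_8\oplus\dots$. Concretely, $E_8\otimes\mathbb{Z}[\text{quaternion orders}]$ lets one realize $k$ as a sum of four squares, $x\mapsto(a x,bx,cx,dx)$, giving $kE_8\hookrightarrow 4E_8$. When $M$ lacks room, I would instead use $E_8\oplus -E_8\cong 8H$ and embed into the $H$-part.

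\textbf{Cases 3 and 4.} Embed $\langle\pm1\rangle$ with $d=2k$ into $H$ via $x\mapsto u+kv$. For $\sigma(M)\ne0$, the target contains an $\pm E_8$, and the values $2,4,6$ arise from vectors of norm $2,4,6$ in $E_8$. Orthogonal families of these exist in sufficient number: $E_8\supset A_1^8$ scaled appropriately, $D_4$-type frames, and so on.

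\emph{Final statement.} The claim that $h^2d$ is attainable follows by composing with $x\mapsto hx$.

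The main obstacle is the definite and signature bookkeeping in Cases 4 and 6. There one must check that enough mutually orthogonal vectors of the prescribed norm exist inside the available $E_8$ and $H$ summands.
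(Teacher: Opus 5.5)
Your necessity argument and the hyperbolic embeddings ($u\pm kv$; $e\mapsto e$, $f\mapsto kf$; $e\mapsto u+v$, $f\mapsto k(u-v)$) are correct. There are, however, genuine gaps.

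The main gap is the one you flagged yourself: you never invoke Donaldson's theorem, and the statement cannot be proved without it. At the level of abstract unimodular lattices the table is false. For example, $\oplus_9\langle 1\rangle$ does not embed in the odd definite lattice $E_8\oplus\langle 1\rangle$, which has only two vectors of norm $1$, so Case~1 with $d=1$ fails. Similarly, $E_8\oplus E_8$ does not embed in $D_{16}^+$, since a full-rank unimodular sublattice is the whole lattice, so Case~6 with $k=1$ fails. ``Routing through a scaled case'' cannot repair this, because $d$ is prescribed. The missing step is the following. PL and smooth coincide in dimension $4$, so by Donaldson (in the version valid for arbitrary $\pi_1$) every definite $\beta_M$, $\beta_N$ is diagonal; in particular, a nonzero even form is indefinite. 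Together with Serre, every odd form is $\oplus\langle1\rangle\oplus\langle-1\rangle$ and every even form is $\oplus(\pm E_8)\oplus H$, with no exceptions. This is the normal form the paper uses throughout.

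Second, you are missing the key lattice lemma for the $E_8$ summands: an embedding $k\cdot E_8\hookrightarrow E_8$ into a \emph{single} copy, for every $k\geq 1$. The paper gets it from Coxeter's integral octonions. One has $E_8\cong J\subset\mathbb{O}$, where $J$ is closed under multiplication and contains $1,i,j,k$. Hence left multiplication by $\alpha=a_1+a_2i+a_3j+a_4k$ with $|\alpha|^2=k$ (Lagrange) is a $\sqrt{k}$-dilation of $J$.

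Your map $x\mapsto(ax,bx,cx,dx)$ needs several orthogonal copies of $E_8$ in $I_M$, and your fallback through $E_8\oplus(-E_8)\cong\oplus_8H$ needs spare hyperbolic summands. Neither is available for $N=M=\K$ and $k=2$: there $k\cdot I_N$ and $I_M$ both have rank $22$, so each $k\cdot(-E_8)$ must land in a rank-$8$ piece. So Case~6 is not established.

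For the same reason, Case~5 only yields $d=2h^2$ from $2\cdot E_8\hookrightarrow\oplus_8\langle 1\rangle$ plus rescaling, and misses $d=4,6,10,\dots$. The paper instead composes $(d/2)\cdot E_8\hookrightarrow E_8$ with the $L_2$ embedding.

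In Case~2, the route $5\cdot I_N\hookrightarrow I_N\oplus I_N\hookrightarrow I_M$ needs $2b_2^\pm(N)\leq b_2^\pm(M)$, which the hypotheses do not give. For instance, $N=\CP^2$ and $M=\CP^2\cs\CPbar^2$ satisfy Case~2, yet $\langle1\rangle\oplus\langle1\rangle$ does not embed in $\langle1\rangle\oplus\langle-1\rangle$. You also need $\langle\pm5\rangle\hookrightarrow\langle1\rangle\oplus\langle-1\rangle$, coming from $5=3^2-2^2$, together with a counting argument; the paper cites Piergallini--Zuddas, Lemma~5.2 for this case. Finally, in Case~4, eight mutually orthogonal norm-$6$ vectors in $E_8$ are asserted rather than constructed; they do not come from rescaling $A_1^8$.
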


To prove this, we need the following preliminary lemmas. We start with the following direct consequence of \cite[Lemma 5.3]{PiergalliniZuddas1}. 
\begin{lemma}[Piergallini--Zuddas]\label{lem:pz}
  For every integer $d \geq 1$, there are isometric embeddings
  \[d \cdot H \hookrightarrow H \quad \text{and} \quad 2d \cdot H \hookrightarrow \langle 1 \rangle \oplus \langle -1 \rangle.\]
\end{lemma}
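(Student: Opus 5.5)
Both embeddings can be written down explicitly on a basis. Since the target lattices have rank two, it is enough to find two vectors in the target whose Gram matrix is the required one. Injectivity then follows automatically: the Gram matrix of the source, $d\cdot H$ or $2d\cdot H$, has determinant $-d^2\neq 0$ (respectively $-4d^2\neq 0$). So any two vectors realizing it are linearly independent, and the induced homomorphism is injective.

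For $d\cdot H\hookrightarrow H$, let $e,f$ be the standard basis of $H$, with $e\cdot e=f\cdot f=0$ and $e\cdot f=1$. Let $e_0,f_0$ be the corresponding basis of $d\cdot H$, with $e_0\cdot f_0=d$ and both vectors isotropic. Send $e_0\mapsto d\,e$ and $f_0\mapsto f$. The images are still isotropic, because $(de)\cdot(de)=d^2(e\cdot e)=0$ and $f\cdot f=0$. Their product is $(de)\cdot f=d$. Hence the Gram matrix is preserved and the map is an isometric embedding.

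For $2d\cdot H\hookrightarrow\langle 1\rangle\oplus\langle -1\rangle$, let $a,b$ be a basis of the target with $a\cdot a=1$, $b\cdot b=-1$ and $a\cdot b=0$. The vectors $a+b$ and $a-b$ are isotropic, and $(a+b)\cdot(a-b)=a\cdot a-b\cdot b=2$. Send the isotropic basis $e_0,f_0$ of $2d\cdot H$, which satisfies $e_0\cdot f_0=2d$, to $d(a+b)$ and $a-b$ respectively. Both images are isotropic, and $d(a+b)\cdot(a-b)=2d$. This gives the required isometric embedding.

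There is no real obstacle here. The only point needing care is the observation that the isotropic vectors of $\langle 1\rangle\oplus\langle -1\rangle$ pair to even integers. This is why the factor $2$ appears in the second embedding: an odd multiple of $H$ cannot be realized there.
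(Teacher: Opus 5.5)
Your proposal is correct. The paper gives no argument of its own here; it simply defers to \cite[Lemma 5.3]{PiergalliniZuddas1}. You instead write the embeddings down explicitly and check the Gram matrices directly:
\[e_0\mapsto d\,e,\quad f_0\mapsto f \qquad\text{for } d\cdot H\hookrightarrow H,\]
\[e_0\mapsto d(a+b),\quad f_0\mapsto a-b \qquad\text{for } 2d\cdot H\hookrightarrow \langle 1\rangle\oplus\langle -1\rangle.\]
Injectivity then follows from the nondegeneracy of the source form. This makes the lemma self-contained at no cost. It is also in the same spirit as the paper's own proof of Lemma \ref{lem:hyperb}, which uses the explicit vectors $e_1\pm k e_2$ in $H$.

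Your closing remark is also right and worth keeping. The isotropic vectors of $\langle 1\rangle\oplus\langle -1\rangle$ are the multiples of $a\pm b$, and they pair to even integers, so $k\cdot H$ embeds there only for even $k$.

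One small wording point: two vectors suffice because the \emph{source} lattice has rank two, so a homomorphism is determined by the images of a basis. The rank of the target plays no role.
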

\begin{proof}
    This directly follows from \cite[Lemma 5.3]{PiergalliniZuddas1}. 
\end{proof}

\begin{lemma}\label{lem:hyperb}
For every even $d \geq 2$, there is an isometric embedding
\[\langle d\rangle \oplus \langle -d \rangle \hookrightarrow H.\]
\end{lemma}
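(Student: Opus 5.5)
We need vectors $u,v \in H$ with $u\cdot u = d$, $v \cdot v = -d$ and $u\cdot v = 0$. Write $d = 2m$ with $m \geq 1$, and use the standard basis $e,f$ of $H$, where $e\cdot e = f\cdot f = 0$ and $e\cdot f = 1$. For a vector $xe+yf$ the square is $2xy$, and the pairing of $xe+yf$ with $x'e+y'f$ is $xy'+x'y$.

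I will take $u = e + mf$ and $v = e - mf$. Then
\[u\cdot u = 2m = d,\qquad v\cdot v = -2m = -d,\qquad u\cdot v = -m + m = 0.\]
So the map sending the standard generators of $\langle d\rangle\oplus\langle -d\rangle$ to $u$ and $v$ preserves the bilinear forms.

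The only remaining point is injectivity. The vectors $u$ and $v$ are linearly independent: the determinant of their coordinate matrix, with rows $(1,m)$ and $(1,-m)$, is $-2m \neq 0$. Equivalently, the Gram matrix $\mathrm{diag}(d,-d)$ is nondegenerate, which already forces the map to be injective.

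There is no real obstacle here. The one place where the hypothesis matters is the parity of $d$: every vector of the even lattice $H$ has even square, so evenness is exactly what is needed for $d$ to be realized as a square. The construction is then completely explicit.
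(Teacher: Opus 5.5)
Your proposal is correct and is exactly the paper's argument: the paper also takes the vectors $e_1 + ke_2$ and $e_1 - ke_2$ in the standard hyperbolic basis, with $2k = d$. Your explicit injectivity check, via the nondegenerate Gram matrix $\mathrm{diag}(d,-d)$, fills in a detail the paper leaves implicit.
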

\begin{proof}
    Let $e_1, e_2$ be a basis of the hyperbolic lattice $H$ such that $e_i^2=0$ and $e_1\cdot e_2=e_2 \cdot e_1=1$. For every $k\geq 1$, it is easy to verify that the vectors $e_1+ ke_2$ and $e_1-ke_2$ span a sublattice of $H$ isomorphic to $\langle 2k \rangle \oplus \langle -2k \rangle$, and let $2k = d$. 
\end{proof}

\begin{lemma}\label{lem:E82}
For every integer $d \geq 1$
there are isometric embeddings
\[d \cdot E_8 \hookrightarrow E_8 \quad \text{and} \quad d \cdot E_8 \hookrightarrow \oplus_8 H.\]
\end{lemma}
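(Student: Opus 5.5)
The plan is to prove the first embedding using the multiplicative arithmetic of the integral octonions, and then derive the second embedding from the first by splitting $\oplus_8 H$ as a positive and a negative copy of $E_8$.

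\textbf{First embedding $d \cdot E_8 \hookrightarrow E_8$.} I would realize $E_8$ as the ring $\mathcal O$ of integral Cayley numbers (Coxeter's integral octonions), with the quadratic form $q(x) = 2N(x)$, where $N$ is the octonion norm. The key input is the classical fact that $(\mathcal O, 2N)$ is an even unimodular positive definite lattice of rank $8$, hence isometric to $E_8$. Since $\mathcal O$ is closed under multiplication, for any $u \in \mathcal O$ the left multiplication $L_u \colon x \mapsto ux$ is a $\Z$-linear map $\mathcal O \to \mathcal O$. Because the octonion norm is multiplicative, $N(ux) = N(u)N(x)$, so $q(L_u x) = N(u)\, q(x)$. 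By polarization, $L_u$ multiplies the bilinear form by $N(u)$.

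It therefore suffices to find $u \in \mathcal O$ with $N(u) = d$. By Lagrange's four-square theorem, $d = a^2+b^2+c^2+e^2$ with $a,b,c,e \in \Z$. The Lipschitz quaternions $a + bi + cj + ek$ lie inside $\mathcal O$ and have norm $d$. The map $L_u$ is injective since $u \neq 0$ and octonions have no zero divisors. Hence $L_u$ gives an isometric embedding $d\cdot E_8 \hookrightarrow E_8$.

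The main obstacle is justifying the identification of $E_8$ with a multiplicatively closed lattice. I would cite the standard description of the integral octonions (e.g.\ Conway--Smith). If one prefers to avoid octonions, a fallback is an explicit $8\times 8$ integer matrix $A$ with $A^{T} E_8 A = d\, E_8$, built from the Gram matrix in the fixed basis. However, the octonionic argument gives all $d$ uniformly, so I would commit to it.

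\textbf{Second embedding $d \cdot E_8 \hookrightarrow \oplus_8 H$.} By Serre's classification of indefinite unimodular forms, $E_8 \oplus (-E_8)$ is even, indefinite, unimodular, of rank $16$ and signature $0$, so it is isometric to $\oplus_8 H$. Let $f \colon (d+1)\cdot E_8 \hookrightarrow E_8$ be the embedding from the first part, applied with $d+1$ in place of $d$. Define
\[ \phi\colon E_8 \to E_8 \oplus (-E_8), \qquad \phi(x) = (f(x), x). \]
Then $\phi(x)\cdot\phi(y) = (d+1)\, x\cdot y - x\cdot y = d\, x\cdot y$, so $\phi$ scales the form by $d$. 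It is injective because its second component is the identity. Composing with the isometry $E_8 \oplus (-E_8) \cong \oplus_8 H$ gives $d\cdot E_8 \hookrightarrow \oplus_8 H$ for every $d \geq 1$.
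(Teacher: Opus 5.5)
Your proposal is correct and follows the paper's proof. The first embedding is left multiplication by a Lipschitz quaternion of norm $d$ (from Lagrange's theorem) on Coxeter's integral octonions, identified with $E_8$ via the form $2N$, and the second uses $E_8\oplus(-E_8)\cong\oplus_8 H$. The one difference is in the second embedding: the paper composes $d\cdot E_8\hookrightarrow E_8$ with the inclusion as the first summand, so your graph construction with $d+1$ is unnecessary, though it does give a primitive embedding as a small bonus.
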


\begin{proof}
    We begin by constructing the embedding $d\cdot E_8 \hookrightarrow E_8$.
    
    Following the classical paper by Coxeter \cite{C}, we consider the sublattice $J$ of the octonion algebra $\mathbb{O}$, having as basis the columns of the following matrix
    \[\left(
\begin{array}{cccccccc}
 1 & 0 & 0 & 0 & 0 & -\frac{1}{2} & -\frac{1}{2} &
   -\frac{1}{2} \\[4pt]
 0 & 1 & 0 & 0 & \frac{1}{2} & 0 & \frac{1}{2} &
   -\frac{1}{2} \\[4pt]
 0 & 0 & 1 & 0 & \frac{1}{2} & -\frac{1}{2} & 0 &
   \frac{1}{2} \\[4pt]
 0 & 0 & 0 & 1 & \frac{1}{2} & \frac{1}{2} & -\frac{1}{2}
   & 0 \\[4pt]
 0 & 0 & 0 & 0 & \frac{1}{2} & 0 & 0 & 0 \\[4pt]
 0 & 0 & 0 & 0 & 0 & \frac{1}{2} & 0 & 0 \\[4pt]
 0 & 0 & 0 & 0 & 0 & 0 & \frac{1}{2} & 0 \\[4pt]
 0 & 0 & 0 & 0 & 0 & 0 & 0 & \frac{1}{2} \\
\end{array}
\right).\] This basis is precisely the one in Formula (5.1) in Coxeter's paper. Notice that the first four columns correspond, respectively, to the quaternion units $1, i, j, k\in \mathbb{H} \subset \mathbb{O}$. Moreover, the lattice $J$ is closed under octonion multiplication.

We endow $J$ with twice the standard scalar product of $\mathbb{O}$. This makes $J$ into an integral unimodular positive definite even lattice and, by the uniqueness theorem of Mordell \cite{Mor1938}, there is an isomorphism between $J$ and $E_8$. 

Given $d \geq 1$, by Lagrange's four squares theorem one can find $a_1,a_2,a_3,a_4\in \Z$ such that $d=a_1^2+a_2^2+a_3^2+a_4^2$. Then the octonion $\alpha=a_1+a_2i+a_3j+a_4k\in \mathbb{H}\subset \mathbb{O}$ is such that $|\alpha|^2=d$.

It follows that the left multiplication by $\alpha$ is a $|\alpha|$-dilation $J \rightarrow J$ which induces an isometric embedding $d \cdot E_8 \hookrightarrow E_8$ up to the above mentioned isomorphism $E_8\cong J$. Indeed, setting $\alpha' = \alpha/\sqrt d$, we have that
    \[\langle \alpha x,\alpha y \rangle = \langle \sqrt d \, \alpha' x, \sqrt d \, \alpha' y \rangle =d \langle \alpha 'x, \alpha' y \rangle=d \langle x,y \rangle\]
    for every $x, y \in \mathbb{O}$, since the multiplication by the unit octonion $\alpha'$ yields a unitary transformation of $\mathbb{O}$.

\smallskip
    We now construct an embedding $d\cdot E_8 \hookrightarrow \oplus_8 H$.
    There is an isomorphism $E_8 \oplus (-E_8) \cong \oplus_8 H$ because they are both even indefinite unimodular lattices with the same rank and signature. Then the desired embedding can be obtained as the composition \[d \cdot E_8 \hookrightarrow E_8 \hookrightarrow E_8\oplus (-E_8) \cong \oplus_8 H,\]
    where the second arrow is the canonical embedding in the first direct summand.
\end{proof}

\begin{lemma}\label{lem:emb}
For every even $d \geq 2$, there is an isometric embedding
\[d \cdot E_8 \hookrightarrow \oplus_8 \langle 1 \rangle.\]
\end{lemma}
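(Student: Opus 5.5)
The plan is to reduce the statement to the case $d=2$ and then use Lemma \ref{lem:E82} to absorb the remaining odd factor.

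\textbf{Reduction.} Write $d = 2m$ with $m \geq 1$. By Lemma \ref{lem:E82} there is an isometric embedding $m \cdot E_8 \hookrightarrow E_8$. Scaling both forms by $2$ gives an isometric embedding $2m \cdot E_8 \hookrightarrow 2 \cdot E_8$. It therefore suffices to construct an isometric embedding $2\cdot E_8 \hookrightarrow \oplus_8 \langle 1\rangle$, and compose the two.

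\textbf{The case $d=2$.} Use the standard coordinate model
\[E_8=\bigl\{x\in \Z^8\cup(\Z+\tfrac12)^8 : \textstyle\sum x_i \in 2\Z\bigr\}\subset \mathbb{R}^8,\]
with the Euclidean form; this is an even unimodular positive definite lattice of rank $8$, hence isomorphic to $E_8$ by Mordell's theorem, as already used in the proof of Lemma \ref{lem:E82}. We need a linear map $\phi\colon \mathbb{R}^8\to\mathbb{R}^8$ with $\phi(E_8)\subset \Z^8$ and $\langle \phi x,\phi y\rangle = 2\langle x,y\rangle$. Take $\phi = \sqrt2\, R$ with $R$ orthogonal, so that $\phi$ is a block-diagonal sum of four copies of the $2\times 2$ matrix $\left(\begin{smallmatrix}1&1\\1&-1\end{smallmatrix}\right)$ acting on the coordinate pairs $(x_1,x_2),\dots,(x_7,x_8)$. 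This matrix $A$ satisfies $A^TA=2I$, so $\phi$ scales the form by $2$.

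\textbf{The main point: integrality.} We must check that $\phi(E_8)\subset \Z^8$. For $x\in \Z^8$ this is clear. For $x\in(\Z+\frac12)^8$, each pair $x_{2i-1}\pm x_{2i}$ is a sum or difference of two half-integers, hence an integer. So $\phi(E_8)\subset\Z^8$.

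If one also wants the image to lie in an appropriate sublattice (not needed here), there is an alternative. Since $\phi(E_8)$ is a full-rank sublattice of index $2^4$ in $\Z^8$, one could instead exhibit $\sqrt2 E_8\cong D_8^{+}$-type structure via the Hamming code construction: $E_8 \cong \frac{1}{\sqrt2}\{v\in\Z^8 : v \bmod 2 \in \mathcal{H}_8\}$, where $\mathcal{H}_8$ is the extended Hamming $[8,4,4]$ code. This gives the same conclusion directly.

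The only step requiring care is the integrality of half-integer vectors, handled by the pairing above. Composing this embedding $2\cdot E_8\hookrightarrow \oplus_8\langle1\rangle$ with the rescaled embedding from Lemma \ref{lem:E82} yields the claim for every even $d$.
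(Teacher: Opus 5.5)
Your proof is correct, and its skeleton is the paper's. You write $d=2m$, rescale the embedding $m\cdot E_8\hookrightarrow E_8$ of Lemma \ref{lem:E82} by $2$, and compose with an embedding $2\cdot E_8\hookrightarrow\oplus_8\langle 1\rangle$.

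The only difference is how you produce this last embedding.

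\textbf{The paper's base case.} It writes down an explicit integer matrix $L_2$ (namely $2G^{-1}$ from Piergallini--Zuddas) and checks ${}^tL_2L_2=2E_8$ directly against the Gram matrix it fixed. No change of model is needed, but the verification is an opaque $8\times 8$ computation.

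\textbf{Your base case.} You pass to the coordinate model $\Gamma_8\subset\Z^8\cup(\Z+\frac12)^8$, identified with $E_8$ via Mordell's theorem, which the paper already uses in Lemma \ref{lem:E82}. You then apply the block map $\mathrm{diag}(A,A,A,A)$ with $A=\left(\begin{smallmatrix}1&1\\1&-1\end{smallmatrix}\right)$.
\begin{itemize}
\item Both ${}^tAA=2I$ and the integrality of the image become one-line checks.
\item The argument shows why such an embedding exists: pairing coordinates clears the common half-integer parts.
\item It works verbatim for any subset of $\Z^n\cup(\Z+\frac12)^n$ with $n$ even.
\end{itemize}

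Your Hamming-code aside is unnecessary, and its label is off: $D_8^+$ is $E_8$ itself, not $\sqrt2\,E_8$. It does not affect the proof.
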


\begin{proof} Consider the matrix
    \[L_2 = \left(\begin{matrix}
    1 & 2 & 1 & 0 & 0 & 0 & 0 & 0\, \\
    0 & 0 & 1 & 2 & 1 & 0 & 0 & 0\, \\
    0 & 0 & 0 & 0 & 1 & 2 & 1 & 0\, \\
    0 & 0 & 0 & 0 & 1 & 0 & 0 & 2\, \\
    0 & 0 & 1 & 0 & -1 & 0 & 1 & 0\, \\
    1 & 0 & -1 & 0 & 0 & 0 & 1 & 0\, \\
    1 & 0 & 0 & 0 & 0 & 0 & -1 & 0\, \\
    -1 & 0 & 0 & 0 & 0 & 0 & 0 & 0\,\, 
    \end{matrix}\,\right)\]
    and observe that ${^t\hspace{-1pt}}L_2 L_2 = 2 E_8$, which implies that the sublattice of $\oplus_8 \langle 1 \rangle$ spanned by the columns of $L_2$ is isomorphic to $2 \cdot E_8$.
    Notice that $L_2=2 G^{-1}$, where $G$ is the matrix considered in \cite[proof of Lemma 5.2]{PiergalliniZuddas1}.

    Let $d \geq 2$ be an even integer. We have a sequence of embeddings
    \[d \cdot E_8 =2 \cdot (d/2 \cdot E_8) \hookrightarrow 2 \cdot E_8 \hookrightarrow \oplus_8 \langle 1 \rangle,\]
    where the existence of the first arrow follow from Lemma \ref{lem:E82} and the second arrow is the one defined by the matrix $L_2$.
\end{proof}

\begin{proof}[Proof of Proposition \ref{prop:inequalities}]
First of all we observe that the last part of the statement is immediate, as it is sufficient to rescale by $h$ any given isometric embedding $\varphi\: d \cdot I_N \hookrightarrow I_M$.

Suppose that the inequalities $b_2^\pm(M)\geq b_2^\pm(N)$ hold. We subsequently analyze all the possible cases as in the table in the statement.

\medskip

\Case{1: $\beta_N$ and $\beta_M$ odd.} In this case $I_N$ and $I_M$ are diagonalizable by Donaldson's theorem \cite{Do87} and the Serre classification of indefinite integral unimodular inner product spaces \cite{Se73, MH73}. The inequalities $b_2^{\pm}(M) \geq b_2^{\pm}(N)$ then trivially imply the existence of an embedding $I_N \hookrightarrow I_M$.

\medskip

\Case{2: $\beta_N$ and $\beta_M$ odd, $b_2(N) \leq b_2(M)/2$.} There is an isometric embedding \[5 \cdot I_N \hookrightarrow I_M\] by \cite[Lemma 5.2]{PiergalliniZuddas1}.

\medskip 

\Case{3: $\beta_N$ odd, $\beta_M$ even and $\sigma(M)=0$.} In this case, $I_N$ is diagonalizable while $I_M \cong \oplus_{b^{\pm}_2(M)} H$ by the Serre classification. Lemma \ref{lem:hyperb} implies the existence of an embedding \[2k \cdot I_N \hookrightarrow I_M\] for every $k \geq 1$.

\medskip

\Case{4: $\beta_N$ odd, $\beta_M$ even and $\sigma(M)\neq 0$.} This case follows from an argument employed in \cite[Lemma 5.2]{PiergalliniZuddas1}. More precisely, there is an isomorphism 
\begin{equation}\label{I_M/eqn} I_M \cong \oplus_{\epsilon(M)} (\pm E_8) \oplus_{\eta(M)} H
\end{equation}
by Donaldson's theorem and the Serre classification, where we set $\epsilon(M)=|\sigma(M)|/8$ and $\eta(M)=b_2^{\mp}(M)$. In the proof of \cite[Lemma 5.2]{PiergalliniZuddas1}, it is shown that there are isometric embeddings \[\oplus_8 \langle d \rangle \hookrightarrow E_8\] for $d\in\{2,4,6\}$. Since we have also isometric embeddings \[\langle d \rangle \oplus \langle - d \rangle  \hookrightarrow H\] for the same values of $d$ by Lemma \ref{lem:hyperb}, the conclusion follows.
    
\medskip 

\Cases{5: $\beta_N$ even and $\beta_M$ odd.} 
By Donaldson's theorem and the Serre classification, there is an isomorphism 
\begin{equation}\label{I_N/eqn}
I_N\cong \oplus_{\epsilon(N)} (\pm E_8) \oplus_{\eta(N)} H,
\end{equation}
where $\epsilon(N)$ and $\eta(N)$ are as in case 4. Lemma \ref{lem:pz} and Lemma \ref{lem:emb} imply the existence of an isometric embedding
\[d \cdot \left(\oplus_{\epsilon(N)} (\pm E_8) \oplus_{\eta(N)} H \right)\hookrightarrow \oplus_{b_2^{+}(N)} \langle 1 \rangle \oplus_{b_2^{-}(N)} \langle -1 \rangle \hookrightarrow I_M\]
for every even $d\geq 2$.

\medskip 

\Case{6: $\beta_N$ and $\beta_M$ even.} 
We consider again the decompositions \eqref{I_M/eqn} and \eqref{I_N/eqn} with $\epsilon(M), \eta(M),\epsilon(N), \eta(N) \geq 0$.

Suppose first that $\sigma(N)\geq 0$ and $\sigma(M)\leq 0$. By assumption, we have
\[\eta(N) + 8\epsilon(N) = b_2^-(N) + \sigma(N) = b_2^+(N)\leq b_2^+(M)=\eta(M).\]
This means that the number of the hyperbolic direct summands plus eight times the number of the $E_8$ direct summands of $I_N$ is less than or equal to the number of the hyperbolic direct summands of $I_M$.

Therefore, inside $I_M$ there are sufficiently many hyperbolic direct summands in which we can embed $d \cdot I_N$ for every integer $d \geq 1$. This follows from Lemma \ref{lem:E82} for the $E_8$ direct summands and from \cite[Lemma 5.3]{PiergalliniZuddas1} which guarantees that an embedding $d\cdot H \hookrightarrow H$ exists for every $d\geq 1$. 

If $\sigma(N)\geq 0$ and $\sigma(M) \geq 0$, we have that 
\begin{equation}\label{eq:dis}
   b_2^-(N) \leq b_2^-(M) \quad \text{and} \quad \sigma(N)+b_2^-(N) \leq \sigma(M)+b_2^-(M). 
\end{equation}
The first inequality in \eqref{eq:dis} means that $\eta(N)\leq \eta(M)$. We can hence embed $d$ times the hyperbolic direct summands of $I_N$ into the hyperbolic direct summands of $I_M$ again by \cite[Lemma 5.3]{PiergalliniZuddas1}.

If $\sigma(N)\leq \sigma(M)$, we have that $\epsilon(N)\leq \epsilon(M)$ and we then use Lemma \ref{lem:E82} to embed $d$ times the $E_8$ direct summands of $I_N$ into the $E_8$ direct summands of $I_M$. 

If instead $\sigma(N) > \sigma(M)$, by the second inequality in \eqref{eq:dis} we have that
\[\sigma(N)-\sigma(M)\leq b_2^-(M)-b_2^-(N),\] hence
\[8(\epsilon(N)-\epsilon(M))\leq \eta(M)-\eta(N).\]
Then we can embed the direct summands $d\cdot E_8$ of $d\cdot I_N$ into $I_M$ by sending $\epsilon(M)$ of them into the $E_8$ direct summands of $I_M$, and the remaining $\epsilon(N) - \epsilon(M)$ into the hyperbolic part of $I_M$.

The case where $\sigma(N)\leq 0$ can be handled analogously, depending on the sign of $\sigma(M)$, and it is hence omitted.

We now prove the converse. Suppose that there is an isometric embedding of $d \cdot I_N$ inside $I_M$ for some positive integer $d$. Then the inequalities $b_2^{\pm}(N) \leq b_2^{\pm}(M)$ follow from the very definition of the $b_2^{\pm}$ invariants of a closed oriented 4-manifold.
\end{proof}

\section{Fillability of 3-dimensional branched coverings}

\begin{definition}\label{def:cob}
 The cobordism group of simple $d$-fold branched coverings over $S^3$ is defined as
 \[\Gamma_d(S^3)=\{p \colon Y \darrow{d} S^3 \text{ simple }\mid B_p \subset S^3 \text{ a link} \}/{\sim},\]
 where $p_1 \sim p_2$ if and only if there exists a simple $d$-fold branched covering \[c: C \darrow{d} S^3 \times [0,1]\] which is a cobordism between $p_1$ and $p_2$. Moreover, we require that the branch set $B_c\subset S^3 \times [0,1]$ is a proper locally flat PL surface. 
 
 We endow $\Gamma_d(S^3)$ with a binary operation by setting $[p_1]+[p_2]=[p]$, where $p$ is the simple $d$-fold covering over $S^3$ whose labeled branch set $B_p$ is given by the disjoint separate union of $B_{p_1}$ and $B_{p_2}$ inside $S^3$, for all $[p_1],[p_2]\in \Gamma_d(S^3)$. This makes $\Gamma_d(S^3)$ an abelian group.
 \end{definition}

 The following result will be our main tool to extend a given simple branched covering over a $4$-handle.

\begin{theorem}\label{thm:fillable}
Let $W^4$ be a compact connected PL $4$-manifold with non-empty boundary, and let \[p \colon \partial W^4 \darrow{d} S^3\] be a simple $d$-fold covering branched over a link $B_p \subset S^3$ such that $[p]=0\in \Gamma_d(S^3)$, where $d \geq 4$. Then $p$ extends to a simple $d$-fold covering \[ q \colon W^4 \darrow{d} B^4\] branched over a PL embedded surface $B_q \subset B^4$ which is locally flat if $d \geq 5$, while it has at most nodal singularities if $d=4$.     
\end{theorem}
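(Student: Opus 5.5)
The plan is to combine the cobordism hypothesis $[p]=0$ with the known extension results of Piergallini (and Piergallini--Iori) for branched coverings of $4$-manifolds over $B^4$.

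\emph{Step 1: reduce to a standard boundary covering.} Since $[p]=0\in\Gamma_d(S^3)$, there is a simple $d$-fold branched covering $c\colon C\darrow{d} S^3\times[0,1]$ with $c|_{\partial_0 C}=p$ and $c|_{\partial_1 C}$ trivial, i.e.\ with empty branch set over $S^3\times\{1\}$. Here $\partial_1 C$ is $d$ copies of $S^3$, because $S^3$ is simply connected. Next we use the Montesinos-type description of simple coverings of $S^3$ via labeled links: any simple $d$-fold covering of $S^3$ with $d\geq 4$ can be related to another by covering moves, and connected sums with a trivially labeled unknot. Using this, we modify $c$ so that the top end is a single standard covering $S^3\darrow{d}S^3$ whose branch set is a trivial labeled unlink. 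This standard covering extends over $\natural_{d-1}(S^1\times B^3)$ or over $B^4$ in the usual way, with trivial disks as branch set.

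\emph{Step 2: relate $W$ to $C$.} The manifold $W' \defeq C\cup_{\partial_1 C}(\text{filling})$ is a $4$-manifold with $\partial W'=\partial W$, and $c$ extends to a covering $W'\darrow{d}B^4$. In general $W$ and $W'$ differ. To fix this, we apply the relative version of the Piergallini--Iori theorem in \cite{P,PI}. Given a compact $4$-manifold $W$ and a simple covering of $\partial W$ over $S^3$ that extends over \emph{some} $4$-manifold with the same boundary, one can realize $W$ itself as a simple $d$-fold covering of $B^4$ extending it. The argument would go as follows. First take a handle decomposition of $W$ relative to a collar of the boundary. Then turn upside down to view $W$ as $\partial W\times I$ with $1$-, $2$-, $3$- and $4$-handles attached to the other side; equivalently, use $W'$ and a relative Kirby diagram connecting $W$ to $W'$. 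Finally, lift handles to ribbon surface moves in $B^4$.

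\emph{Step 3: handle-by-handle lifting.} The $1$- and $2$-handles correspond to adding bands and disks to the branch surface, with labels chosen using $d\geq 4$. The $3$- and $4$-handles are dealt with by the Piergallini--Iori technique of converting them through covering moves. Labeled framed links must be realized, so we use the standard result that any framed link in a $d$-fold simple cover of $S^3$ (with $d\geq 4$) can be isotoped to be the lift of a suitably labeled band or arc after stabilization. This yields $q$ with $B_q$ a ribbon-type surface. Its singularities are nodes when $d=4$. When $d\geq 5$ they can be removed by the Iori--Piergallini desingularization, which trades each node for a locally flat piece using the extra sheet.

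\emph{Main obstacle.} The hardest step is Step 3, namely controlling the $3$-handles and ensuring the final surface is locally flat for $d\geq 5$. I would first try to cite the relative statements in \cite{PI} directly, and verify only that the vanishing of $[p]$ supplies the needed initial extension.
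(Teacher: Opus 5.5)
Your Step~1 is right in substance. $[p]=0$ means exactly that $B_p$ bounds a proper locally flat surface $F\subset B^4$ over which $\omega_p$ extends, i.e.\ $p$ extends to some $W'\to B^4$. To see this, cap the trivially branched end of $c$ with $d$ copies of $B^4$. No covering moves are needed there, and they would not apply anyway, since that end is the disconnected trivial covering.

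The gap is in the next step. You want to invoke the statement that an extension over \emph{some} $W'$ gives one over $W$ with the same boundary covering. That is not what \cite{P,PI} prove; they represent closed manifolds. The relative extension theorem the paper actually uses is \cite[Theorem 1.4]{PiergalliniZuddas2}. Its hypothesis is that $B_p$ bound a labeled \emph{ribbon} surface in $B^4$, and a general locally flat labeled surface $F$ need not be ribbon. The missing idea is to replace $F$ by a labeled ribbon surface with the same boundary, which is exactly \cite[Lemma 1.14]{BPZ}. After that, \cite[Theorem 1.4]{PiergalliniZuddas2} gives $q$ directly, with $B_q$ locally flat for $d\geq 5$ and nodal for $d=4$; this is the entire proof in the paper. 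Your proposal never mentions ribbonness, so it never connects the hypothesis $[p]=0$ to an available extension theorem.

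Steps~2--3 do not close this gap on their own. The handle-by-handle outline never uses $W'$ or $[p]=0$, so it is not where the hypothesis enters. As written it would amount to an unconditional extension theorem, which you would then have to prove in full. The parts you treat as routine are its real substance:
\begin{itemize}
\item making each framed attaching curve in the current cover a lift of a labeled arc or band with the correct framing;
\item standardizing the boundary covering before the $3$- and $4$-handles by covering moves realized as product cobordisms, while keeping the identification with $p$ on $\partial W$ fixed so that $q|_{\partial W}=p$ exactly;
\item removing the nodes when $d\geq 5$.
\end{itemize}
Attributing these to ``the Piergallini--Iori technique'' defers the whole argument to an unspecified relative result.
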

 
 \begin{proof}
     It is a direct consequence of \cite[Theorem 1.4]{PiergalliniZuddas2}, in light of \cite[Lemma 1.14]{BPZ}. More precisely, the condition $[p]=0\in \Gamma_d(S^3)$ is equivalent to asking that the branch set $B_p\subset S^3$ bounds a locally flat proper surface in $B^4$ over which the monodromy representation of $p$ extends. But then one can choose such surface to be ribbon because of \cite[Lemma 1.14]{BPZ} and the conclusion is a direct application of \cite[Theorem 1.4]{PiergalliniZuddas2}.
 \end{proof}

\section{Proof of Theorem \ref{thmA}}
\label{proofs/sec}
 This section is devoted to the proof of Theorem \ref{thmA}. We first state and prove the following lemma.

\begin{lemma}\label{lem:bands}
    Let $A= A_1 \sqcup \dots \sqcup A_n$ and $B=B_1 \sqcup \dots \sqcup B_n$ be two oriented links in $S^3$ such that $\lk(A_i,A_j)=\lk(B_i,B_j)$ for every $i \neq j$. Then, for every $i=1, \dots, n$, $A_i$ is related to $B_i$ via a finite sequence of oriented band attachments and ambient isotopies. Moreover, each band can be chosen to intersect the link only at its attaching edges, both lying in the same connected component of the link.
\end{lemma}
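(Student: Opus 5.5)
The plan is to change $A$ into $B$ one component at a time. Each component of the link is modified using only bands attached to that component, plus crossing changes that can be produced by such bands. Two facts drive everything. First, an oriented band attached with both ends on the same component $K$ splits $K$ into two components. If a second band then rejoins them, the net effect is a \emph{band sum} of $K$ with a new curve. Second, any crossing change can be realised by a pair of band moves of this type. At a crossing, attach a small band joining the two strands, compatibly with the orientations. This is possible exactly when both strands lie on the same component, or when we first route an arc of that component over to the crossing. A second band along the other resolution then produces the crossing-changed diagram. This is the standard fact that a crossing change is a composition of two oriented saddle moves.

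\emph{Self-crossings.} For crossings between two strands of the same component $A_i$, the two-band trick applies directly. Each such band meets the link only in its attaching edges, and both edges lie on $A_i$. After finitely many such changes, and isotopies, each $A_i$ can be made unknotted. In fact it can be made into any prescribed knot type, in particular that of $B_i$, relative to the rest of the link.

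\emph{Linking between different components.} The hypothesis $\lk(A_i,A_j)=\lk(B_i,B_j)$ for $i\neq j$ will be used here. First I isotope $A_i$ into a position where it runs parallel to $B_i$ outside small balls, so that the two links differ only near finitely many crossings between distinct components. The difference in each linking number is the signed count of those crossings, and by hypothesis it is zero. So the mismatched crossings between $A_i$ and $A_j$ can be grouped into pairs of opposite sign. For such a pair, one positive and one negative crossing of $A_i$ over $A_j$, I attach a band from $A_i$ to itself. The band follows a path in $S^3$ from one crossing to the other and is chosen disjoint from the rest of the link, which is possible since the band core can be pushed off the link. Band-summing along it replaces the two crossings by a configuration that clasps $A_j$ zero times algebraically. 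This amounts to tubing $A_i$ along $A_j$. A second band, again with both feet on $A_i$, removes the resulting finger, so that $A_i$ no longer passes $A_j$ at those two crossings. Equivalently, the pair of opposite clasps is traded for a self-crossing change of $A_i$, which is then handled as in the previous step.

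\emph{Assembling, and where the difficulty lies.} Repeating these steps transforms $A_i$ into a knot isotopic to $B_i$ in the complement of the other components. Each move is an oriented band attachment whose attaching edges lie on the current $i$-th component and whose interior avoids the link; the bands are applied to the components one after another. The main obstacle is the pairing step. I have to check that two opposite-sign inter-component crossings really can be cancelled by bands whose feet lie only on $A_i$ and whose interiors avoid all other components. The concern is that routing a band core to the crossing might be forced to pass through another component. I would handle this with a finger move along an arc of $A_i$, which is an isotopy and not a band move. This brings the two crossings adjacent to each other along $A_j$, after which a single local band, together with a second band to restore connectivity, does the cancellation. Orientation compatibility of the bands follows from the signs being opposite.
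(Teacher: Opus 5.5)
Your argument is correct in outline, but it takes a different route from the paper's. The paper works in $S^3\times[0,1]$. It chooses connected oriented surfaces $F_i$ with $\partial F_i=A_i\sqcup(-B_i)$ and notes that $F_i\cdot F_j=\lk(A_i,A_j)-\lk(B_i,B_j)=0$ for $i\neq j$. It then tubes away the intersections of distinct $F_i,F_j$ in opposite-sign pairs. Finally it reads the bands off a Morse height function on the now disjoint $F_i$, after pushing local extrema into the boundary, where they become trivial components merged to the $i$-th component by a band. Your proof is essentially the three-dimensional shadow of this:
\begin{itemize}
\item a mixed crossing change is an intersection point of the traces of $A_i$ and $A_j$;
\item your sign pairing is the vanishing of $F_i\cdot F_j$;
\item your finger move along an arc of $A_j$ plays the role of the paper's tube along an arc of $F_j$;
\item self-crossing changes correspond to self-intersections of a single $F_i$, which can be removed without pairing.
\end{itemize}
The paper's version is shorter and avoids diagrammatic bookkeeping (common diagrams, over/under positions, band orientations). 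Yours is explicit and produces the bands concretely: two per self-crossing change and two per cancelled pair of mixed crossings.

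Three points need tightening.

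\emph{The common diagram.} You assert, without proof, that the links can be positioned to differ only at finitely many mixed crossings. Once each $A_i$ has the knot type of $B_i$, move the components one at a time by isotopies of $S^3$; this passes from $A$ to $B$ through mixed crossing changes only. Then pull the short arcs along which these changes occur back to time $0$; they are disjoint by general position. All changes then become simultaneous crossing changes in one diagram of $A$, which is what lets you pair them by sign.

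\emph{The ``equivalently'' remark is false.} A pair of opposite mixed crossing changes cannot in general be traded for a self-crossing change of $A_i$. Self-crossing changes preserve the link-homotopy class. But changing the two crossings between two components of the Borromean rings gives the trivial link, and Milnor's $\bar\mu(123)$ separates these up to link homotopy. You do not need the remark. After the finger move, cut the two adjacent anti-parallel strands of $A_i$ by a band on one side of $A_j$ and rejoin them by a band on the other side; this realizes the double change directly. If one original crossing had $A_j$ over $A_i$, let the finger wind around $A_j$ first.

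\emph{Reading of the statement.} The second band of each of your two-band moves joins two different components of the intermediate link. The paper's saddles and merging bands do the same. So ``same connected component'' must be read as ``same $i$-th sublink'', and your construction satisfies that reading.
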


\begin{proof}
For every $i=1,\dots, n$, let $F_i\subset S^3\times [0,1]$ be a PL locally flat proper oriented surface such that $\partial F_i=A_i \sqcup \widebar{B}_i$, where we view $A\subset S^3\times \{0\}$ and $B \subset S^3 \times \{1\}$ with $S^3\times\{0\}$ and $S^3 \times \{1\}$ oriented as the boundary of $S^3 \times [0,1]$. Without loss of generality, we can assume that each $F_i$ is connected and that $F=F_1 \cup \dots \cup F_n \subset S^3 \times [0,1]$ is self-transversally immersed.
Now, notice that the algebraic intersection between $F_i$ and $F_j$, for $i \neq j$, is given by \[F_i\cdot F_j=\lk(A_i,A_j)-\lk(\widebar{B}_i,\widebar{B}_j)=0,\] where the minus sign arises from the fact that $S^3\times \{0\}$ and $S^3 \times \{1\}$ are endowed with opposite orientations. Since $F_i \cdot F_j$ vanishes, we can geometrically remove all the intersection points by tubing them in pairs with opposite signs. We can hence reduce to the case in which $F$ is nonsingular. Without loss of generality, we can assume that the restriction to $F_i$ of the height function $S^3\times [0,1] \to [0,1]$ is Morse. If all the critical points of this map are saddle points, then the conclusion follows. Suppose now that such map has a local minimum in $\Int F_i$. We can then push this minimum to the lower boundary component $S^3 \times \{0\}$ along an arc that does not intersect $F$ elsewhere. This procedure introduces a new trivial component in the link $A$, which can be trivially merged to $A_i$ with a suitable oriented band in $S^3 \times \{0\}$. By pushing the interior of such band into $\text{Int } S^3 \times [0,1]$, we get a link in $S^3 \times \{0\}$ which is ambiently isotopic to $A$. We repeat this procedure for all the local minima. In a similar way, we can remove the local maxima, and the conclusion follows.
\end{proof}

\begin{proof}[Proof of Theorem \ref{thmA}]
If $p \colon M \darrow{d} N$ is a simple branched covering, then Lemma \ref{lem:necessary} implies the existence of an isometric embedding of lattices $d \cdot I_N \hookrightarrow I_M$. 

Let us now prove the converse. Let \[L = L_1\cup\cdots \cup L_n \subset S^3\] be an oriented framed link representing the $2$-handlebody $N_2$ of $N$. Then $L$ induces a basis $\alpha_1,\dots, \alpha_n$ of $H_2(N; \Z)$, where each $\alpha_i$ is the homology class of a closed connected oriented surface in $N$ obtained by taking the union of a Seifert surface of $L_i$ pushed inside $B^4$ and of the core of the 2-handle attached along $L_i$. Let $A = (a_{ij} = \beta_N(\alpha_i, \alpha_j)) \in \GL(n, \Z)$ be the matrix representing the intersection lattice $I_N$ of $N$ with respect to the basis $\alpha_1,\dots, \alpha_n$, so that $a_{ii}$ is the framing of $L_i$ for $i=1, \dots, n$ and $a_{ij} = \lk(L_i, L_j)$ for all $i\neq j$. 

Suppose now that there is an isometric embedding of lattices \[\varphi \colon d\cdot I_N \hookrightarrow I_M.\] 
Let $S_i\subset M$ be a closed connected oriented locally flat PL surface representing $\varphi(\alpha_i)$ for $i=1,\dots, n$. In particular, we have that
\[\beta_M([S_i],[S_j])= \beta_M(\varphi(\alpha_i), \varphi(\alpha_j))=d \cdot \beta_N(\alpha_i, \alpha_j)=d \cdot a_{ij}\]
for every $i,j\in\{1, \dots, n\}$. Up to possibly a small perturbation, we can assume that the surfaces $S_i$ are in general position, so that they are transversal to each other and no three of them have non-empty intersection. Up to tubing, we can also assume that the geometric intersection of $S_i$ and $S_j$ equals the algebraic intersection for all $i\neq j$. A regular neighborhood of \[S = S_1\cup\cdots\cup S_n\] is then a plumbing of normal disk bundles over $S_1, \dots, S_n$ with intersection matrix $d \cdot A$. At this point, pick a small $4$-ball around each double point of $S$. If there is a surface $S_i$ such that $S_i \cap S_j =\emptyset $ for every $j \neq i$, pick a $4$-ball that intersects $S_i$ along a properly embedded trivial $2$-disk. Now join all these balls by suitable $1$-handles embedded in the complement of a regular neighborhood of $S$ to get a $4$-ball $B^4 \subset M$ whose boundary $S^3$ is transversal to all the surfaces $S_i$ and let $\widetilde L_i = S_i \cap S^3$ for $i=1, \dots, n$. In this way we get an oriented link \[\widetilde L \defeq \widetilde L_1 \sqcup \dots \sqcup \widetilde L_n \subset S^3.
\] where $\widetilde{L}_i$ is oriented as the boundary of $S_i \cap B^4$.

Now notice that, if $\beta \subset S^3$ is an oriented band attached to $\widetilde L_i$ in the complement of $\cup_{j\neq i} \widetilde L_j$ which realizes a homology of the knot $\widetilde L_i$ inside $S^3 - \cup_{j\neq i} \widetilde L_j$, we can push $\beta$ inside and outside $B^4$ to get bands $\beta'$ and $\beta''$ respectively. Their union $\beta'\cup\beta''$ is then an oriented tube that realizes a homology of $S_i$ in $M$. By Lemma \ref{lem:bands}, we can hence modify each $S_i$ up to oriented homology so that the intersection of the plumbing $S$ with $S^3=\partial B^4$ is any oriented link with linking matrix $d\cdot A$. Notice that these tubing operations do not change the intersection numbers $\beta_M([S_i] , [S_j])= d\cdot a_{ij}$ for every $i,j$, since they change the surfaces without affecting their homology classes.

We are now ready to construct the desired branched covering between $M$ and $N$. The starting point is the $d$-fold simple covering \[q_0 \colon B^4 \darrow{d} B^4\] branched over $d-1$ pairwise separated trivial properly embedded $2$-disks with labeling $\trans{1}{2}, \dots, \trans{d-1}{d}$ respectively. Up to a suitable isotopy of the branch set $B_{q_0}$, we can assume that $L$ is disjoint from $B_{q_0}$ and each component $L_i$ of $L$ has a cycle of length $d$ as monodromy, so that its preimage $q_0^{-1}(L_i) \subset S^3$ is connected, see Figure \ref{figura1}.

\begin{figure}[ht]
\includegraphics[scale=0.8]{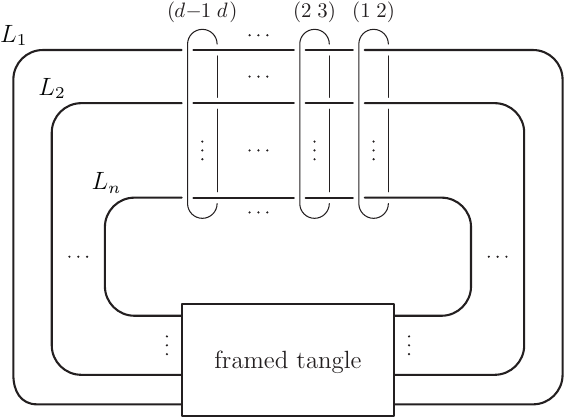}
\caption{Labeled branch set of $q_0|_{\partial}$.}\label{figura1}
\end{figure}

Now notice that, by construction, the intersection matrix of the link \[q_0^{-1}(L_1) \sqcup \dots \sqcup q_0^{-1}(L_n)\] is equal to $d \cdot A$ and this implies that each component $q_0^{-1}(L_i)$ is realizable as the intersection 
$\widetilde{L}_i$ of $S_i$ with $S^3$ as above. 

We are now going to extend $q_0$ over the $2$-handles of $N$. For every $i=1, \dots, n$, \[\Sigma_i \defeq S_i \setminus \text{Int  } B^4\] is a surface with one boundary component $\widetilde{L}_i=S_i \cap S^3$ with a $d$-fold covering \[p_i \colon \Sigma_i \darrow{d} B^2\] branched over $2g(\Sigma_i)+d-1$ points, the meridians of $2g(\Sigma_i)+1$ of them have monodromy $\trans{1}{2}$ and there is exactly one branch point corresponding to each transposition $\trans{2}{3}, \dots, \trans{d-1}{d}$. Here $g(\Sigma_i)$ denotes the genus of the surface $\Sigma_i$. We remark that such $p_i$ can be constructed by stabilizing the $2$-fold branched covering $\Sigma_i \darrow{2} B^2$ obtained by removing two equivariant disks from the domain and target of the quotient map $\Sigma_i \cup_{\partial} B^2 \darrow{2} S^2$ of the hyperelliptic involution on the genus $g(\Sigma_i)$ closed oriented surface $\Sigma_i \cup_{\partial} B^2$. Now notice that $\nu(S_i)\setminus \text{Int } B^4 \cong \Sigma_i \times B^2$ meets $B^4$ along $\nu(\widetilde{L}_i) \subset S^3= \partial B^4$. In particular, $\partial \Sigma_i \times B^2$ is identified with $\nu(\widetilde{L}_i)$ with framing $\beta_M([S_i],[S_i])=d \cdot a_{ii}$. Since this coincides with the pull-back of the framing $a_{ii}$ of $L_i$ under $q_0$, we can extend $q_0 \colon B^4 \darrow{d} B^4$ over the $2$-handles of $N$ by the simple branched coverings \[\nu(S_i) \setminus \text{Int } B^4 \cong \Sigma_i \times B^2 \darrow{d} B^2 \times B^2\] given by $p_i \times \text{id}_{B^2}$. In particular, extending $q_0$ over the $2$-handle attached along $L_i$ adds to the branch set $2g(\Sigma_i)+d-1$ embedded $2$-disks given by parallel copies of the co-core, where $2g(\Sigma_i)+1$ of them have monodromy $\trans{1}{2}$ and the other $2$-disks are labeled by $\trans{2}{3}, \dots, \trans{d-1}{d}$ respectively. In this way, we have just found a simple $d$-fold branched covering
\[q_2 \colon B^4 \cup \nu(S) \darrow{d} N_2\]
where $N_2$ denotes the $2$-handlebody of $N$, as above.

In order to conclude, we need to further extend $q_2$ over the unique $4$-handle of $N$. We will show that the restriction of $q_2$ to the boundary is trivial in the cobordism group $\Gamma_d(S^3)$, see Definition \ref{def:cob}. The conclusion will then follow from Theorem \ref{thm:fillable}.

One can visualize the branch set of $q_2|_{\partial} \colon \partial \nu(S) \darrow{d} S^3$ as follows. First of all, recall that we isotoped the link $L$ as in Figure \ref{figura1}, so that the meridian of each component $L_i$ has a permutation of maximum length as a monodromy. Notice that, since $N$ has no $3$-handles, $S^3$ is diffeomorphic to the boundary of the $2$-handlebody of $N$ and the framed link $L$ provides a Dehn surgery diagram for $S^3$. In this surgery diagram, one can visualize the dual $2$-handles of $N$ as $0$-framed meridians of the components of $L$, see \cite[Example 5.5.5]{GompfStipsicz}. In particular, in the surgery presentation of $S^3$ given by the framed link $L$, the labeled branch set of $q_2|_{\partial}$ is the union of $d-1$ components as in Figure \ref{figura1} and $2g(\Sigma_i)+d-1$ parallel meridians of each $L_i$, $2g(\Sigma_i)+1$ of them with monodromy $\trans{1}{2}$ and exactly one of them for each monodromy of type $\trans{2}{3}, \dots, \trans{d-1}{d}$, see Figure \ref{figura2}.

\begin{figure}[ht]
\includegraphics[scale=0.8]{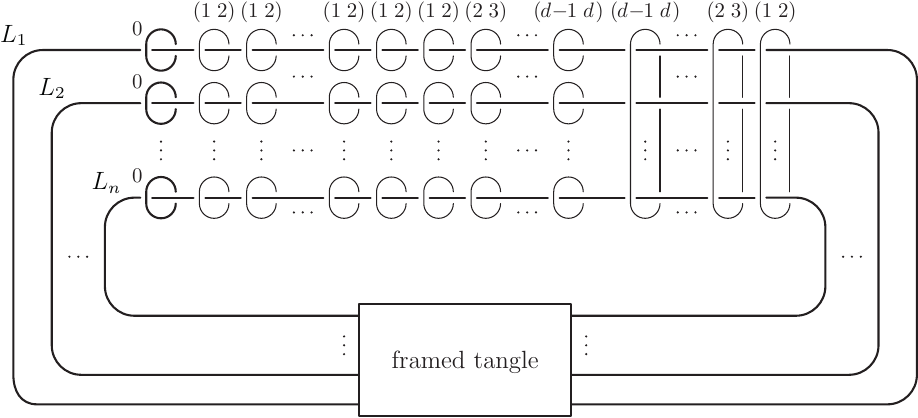}
\caption{Labeled branch set of $q_2|_{\partial}$ in the surgery diagram of $S^3$ given by the framed link $L$.}\label{figura2}
\end{figure}

\begin{figure}[htb]
\includegraphics[scale=0.8]{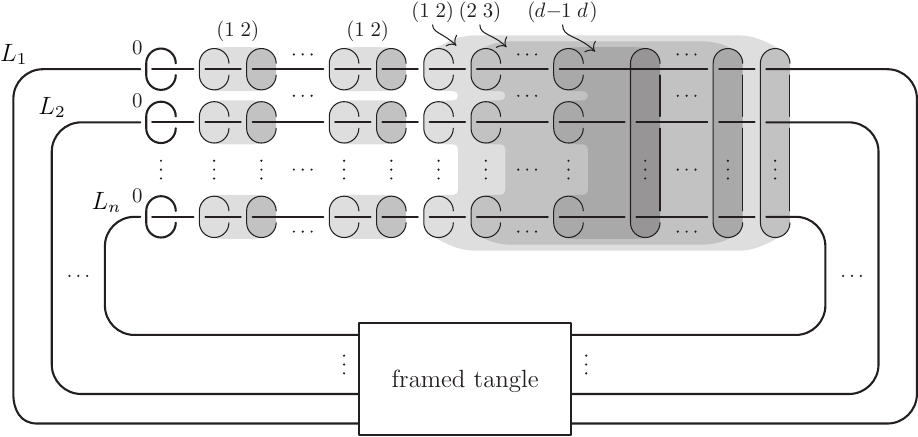}
\caption{Labeled surface bounding $B_{q_2|_{\partial}}$.}\label{figura3}
\end{figure}

We will now construct a surface bounding $B_{q_2|_{\partial}}$, over which the monodromy $\omega_{q_2|_{\partial}}$ extends. This will imply that $[q_2|_{\partial}]=0\in \Gamma_d(S^3)$ and the conclusion will follow from Theorem \ref{thm:fillable}. For every $i=1, \dots, n$, the first $2g(\Sigma_i)$ meridians of $L_i$ with monodromy $\trans{1}{2}$ bound $g(\Sigma_i)$ disjointly embedded labeled cylinders. The union of the remaining $\trans{1}{2}$-meridians of $L$ with the $\trans{1}{2}$-component of $B_{q_2|_{\partial}}$ coming from $B_{q_0}$ bounds an embedded colored surface, see Figure \ref{figura3}. In particular, such surface is an embedded $S^2$ with $n+1$ punctures. Similarly, for $i=2, \dots, d-1$, there are disjointly embedded genus $0$ labeled surfaces with $n+1$ boundary components, co-bounding the $\trans{i}{i+1}$-framed meridians of $L$ and the $\trans{i}{i+1}$-framed component coming from the branch set of $q_0$. The conclusion follows.
\end{proof}

\section{Proofs of Corollaries \ref{cor:b} and \ref{cor:c}}\label{proofcorBC}

\begin{proof}[Proof of Corollary \ref{cor:b}]
One implication is trivial, since any branched covering $p \colon M \rightarrow N$ is a dominant map. For the converse, assume that $f \colon M \rightarrow N$ is dominant and that $\deg f \geq 1$. By \cite{Hopf} (see also \cite[Proposition 2.6]{Harpe}), the pull-back map
\[f^* \colon H^*(N;\mathbb{Q}) \longrightarrow H^*(M;\mathbb{Q}) \] is an injective morphism of $\mathbb{Q}$-algebras. This implies the existence of an isometric embedding $\deg f \cdot I_N \hookrightarrow I_M$. The conclusion then follows from Theorem \ref{thmA}.
 \end{proof}
 \begin{proof}[Proof of Corollary \ref{cor:c}]
 The $\K$ surface satisfies the hypothesis of Theorem \ref{thmA}, since it admits a handle decomposition without $1$- and $3$-handles (see \cite[Figure 8.15]{GompfStipsicz} for $n=2$ or \cite[Figure 2.15]{HKK}). Therefore, there exists a $d$-fold branched covering $M \darrow{d} \K$ if and only if there is an isometric embedding of lattices $d \cdot I_{\K} \hookrightarrow I_M$. Given that $\beta_{\K}$ is even, $b_2^+(\K) = 3$ and $b_2^-(\K) = 19$, the conclusion follows from Proposition \ref{prop:inequalities}.
 \end{proof}

\end{document}